\documentclass[10pt]{amsart}
\usepackage{amsmath}
\usepackage[usenames,dvipsnames]{color}
\usepackage{parskip}
\usepackage{amsfonts}
\usepackage{amscd}
\usepackage[centertags]{amsmath}
\usepackage{amssymb}
\usepackage[all,cmtip]{xy}
\usepackage[english]{babel}
\usepackage{tabularx}
\usepackage{mathtools}
\usepackage{amsxtra}
\usepackage{euscript}
\usepackage[T1]{fontenc}
\usepackage{doc, exscale, fontenc, latexsym, syntonly}
\usepackage{amsfonts}
\usepackage{amsthm}
\usepackage{graphicx}
\usepackage{xcolor}
\usepackage{tikz-cd}
\usepackage{upgreek}

\numberwithin{figure}{section}
\numberwithin{table}{section}

\newcommand{\scat}{\ensuremath{\mathrm{scat}}}

\newcommand{\id}{\ensuremath{\mathrm{id}}}
\newcommand{\TC}{\ensuremath{\mathrm{TC}}}

\newcommand{\pr}{\ensuremath{\mathrm{pr}}}
\newcommand{\SD}{\ensuremath{\mathrm{SD}}}
\newcommand{\sd}{\ensuremath{\mathrm{sd}}}

\newtheorem{theorem}{Theorem}[section]
\newtheorem{definition}{Definition}[section]
\newtheorem{corollary}{Corollary}[section]
\newtheorem{remark}{Remark}[section]
\newtheorem{example}{Example}[section]
\newtheorem{proposition}{Proposition}[section]
\newtheorem{lemma}{Lemma}[section]

\title{Sequential $m$-contiguity distance}

\author{N\.{I}lay Ek\.{I}z Yazici\textsuperscript{1}, Nursultan Kuanyshov, Ayşe Borat\textsuperscript{2}}

\date{\today}

\subjclass[2010]{55M30, 55U10, 55U05}

\keywords{$m$-contiguity distance, homotopic distance, discrete topological complexity, simplicial Lusternik Schnirelmann category}

\address{\textsc{Nilay Ekiz Yazıcı}
Bursa Technical University\\
Faculty of Engineering and Natural Sciences\\
Department of Mathematics\\
Bursa, Turkiye}
\email{nilay.ekiz@btu.edu.tr}

\address{\textsc{Nursultan Kuanyshov}
Suleyman Demirel University (SDU)\\
School of Information Technologies and Applied Mathematics (SITAM)\\
Department of Mathematics\\
Kaskelen, Kazakhistan}
\email{nursultan.kuanyshov@sdu.edu.kz} 

\address{\textsc{Ayşe Borat}
Bursa Technical University\\
Faculty of Engineering and Natural Sciences\\
Department of Mathematics\\
Bursa, Turkiye}
\email{ayse.borat@btu.edu.tr}

\begin{document}

\begin{abstract}
In this paper, we introduce the notion of sequential $m$-contiguity distance for finitely many simplicial maps as a higher analogue of contiguity distance. This invariant generalizes both higher contiguity distance and $m$-contiguity distance, and provides a combinatorial counterpart of sequential $m$-homotopic distance. We investigate its fundamental properties, including invariance under strong homotopy type, behaviour under compositions, categorical products, and barycentric subdivision. 

Moreover, we define sequential $m$-discrete topological complexity of simplicial complexes. As applications, we characterise this invariant (along with $m$-simplicial LS category) in terms of sequential $m$-contiguity distance and prove that they are invariants of strong homotopy type. Furthermore, we establish inequalities relating $m$-simplicial LS category and $m$-discrete sequential topological complexity, extending classical results from topological complexity theory to the simplicial and $m$-dimensional setting.
\end{abstract}

\maketitle

\footnotetext[1] {The author was supported by the TÜBİTAK BIDEB 2211-A Domestic Doctoral Scholarship Program.}
\footnotetext[2]{The corresponding author.}

\section{Introduction}

Homotopy invariants arising from sectional-type constructions play a central role in algebraic topology and its applications. Classical notions such as the Lusternik--Schnirelmann category and Farber's topological complexity measure, in different ways, the complexity of spaces and maps through local homotopical data \cite{ADDD,LS,Sch,Fa}. In recent years, several discrete and simplicial analogues of these invariants have been developed in order to study finite topological models and combinatorial structures. Among these developments are the simplicial Lusternik--Schnirelmann category introduced in \cite{FTMVV}, the discrete topological complexity introduced in \cite{FTMVMV1}, and the contiguity distance between simplicial maps studied in \cite{BPV}. 

The notion of contiguity provides a combinatorial counterpart of homotopy theory in the simplicial setting. Two simplicial maps are contiguous when the union of their images on every simplex forms a simplex in the target complex. This relation generates the contiguity class of simplicial maps, which serves as the simplicial analogue of homotopy classes. Using this framework, Borat, Pamuk and Vergili introduced the contiguity distance between two simplicial maps in \cite{BPV}, inspired by the homotopic distance of maps introduced in \cite{MVML}. Later, higher analogues of contiguity distance were investigated in \cite{EYB}, while the notion of $m$-contiguity distance was introduced in \cite{EYKB} as a simplicial counterpart of the $m$-homotopic distance defined in \cite{MVMLO}. 

Motivated by these developments, in this paper we introduce and systematically study the \emph{sequential $m$-contiguity distance} for finitely many simplicial maps. This invariant simultaneously generalizes higher contiguity distance and $m$-contiguity distance, and provides a natural simplicial analogue of higher $m$-homotopic distance. Roughly speaking, given simplicial maps 
$
\varphi_1,\dots,\varphi_n:K\to K',
$
the sequential $m$-contiguity distance measures how far these maps are from being in the same contiguity class after precomposition with simplicial maps from $m$-dimensional simplicial complexes.

We establish several fundamental properties of this invariant. In particular, we prove invariance under strong homotopy type (Theorem \ref{mSD}), investigate its behaviour with respect to compositions and categorical products (Theorem \ref{categoricalproduct}), and study its relation under barycentric subdivision (Theorem \ref{SD-barycentric}). We also show several monotonicity and functoriality properties analogous to those satisfied by classical homotopic distance invariants.

As applications, we introduce the notion of \emph{$m$-discrete sequential topological complexity} and we characterise this new concept along with \emph{$m$-simplicial Lusternik--Schnirelmann category} in terms of sequential $m$-contiguity distance. More precisely, we show that the $m$-simplicial LS category can be expressed as the sequential $m$-contiguity distance of suitable inclusion-type simplicial maps, while the $m$-discrete sequential topological complexity coincides with the sequential $m$-contiguity distance of projection maps. These characterisations allow us to derive several structural properties and prove that both invariants are preserved under strong homotopy equivalence.

Finally, we establish inequalities relating these invariants, extending the classical relationship between Lusternik--Schnirelmann category and sequential topological complexity to the simplicial and $m$-dimensional setting. In particular, for an edge-path connected simplicial complex $K$, we prove
$
\scat_m(K^{r-1}) \leq \TC_r^m(K) \leq \scat_m(K^r),
$
which generalizes the corresponding inequalities for discrete sequential topological complexity.

The paper is organised as follows. In Section 2, we recall the necessary preliminaries concerning contiguity classes, simplicial LS category, and strong homotopy equivalence. In Section 3, we introduce sequential $m$-contiguity distance and establish its basic properties. Section 4 studies categorical products and proves a categorical product inequality. In Section 5, we investigate the behaviour of the invariant under barycentric subdivision. Finally, in Section 6, we present applications to $m$-simplicial LS category and $m$-discrete sequential topological complexity.

\section{Preliminary}
In this section, we briefly state the classical notions such as the contiguity class of simplicial maps, the simplical LS category, the contiguity distance. 

 Throughout this paper, all simplicial complexes are assumed to be \textbf{edge-path connected}, which is any constant simplicial maps are in the same contiguity class. More details can be seen in \cite{FTMVMV1}.  

We recall that given two simplicial maps $\varphi, \psi : K \to L$ are called \textbf{contiguous} if for  every simplex $\{v_0, \dots, v_k\}$ in $K$, $$\{\varphi(v_0), \dots , \varphi(v_k),\psi(v_0), \dots , \psi(v_k)\}$$ constitutes a simplex in $L$. Such maps are denoted by $\varphi \sim_c \psi$. Furthermore, given two simplicial maps $\varphi, \psi : K \rightarrow L $ are said to be in the same \textbf{contiguity class} if one can find a finite sequence of simplicial maps $\varphi_i : K \rightarrow L$ for $i=0,1,\dots m$ such that $$\varphi = \varphi_1 \sim_c \varphi_2 \sim_c \dots \sim_c \varphi_m= \psi.$$ Such maps are denoted by $\varphi \sim \psi $.

\begin{definition}\cite{BPV} For simplicial maps $\varphi, \psi:K\rightarrow K'$, the contiguity distance between $\varphi$ and $\psi$, denoted by $\SD(\varphi,\psi)$, is the least integer $k\geq 0$ such that there exists a covering of $K$ by subcomplexes $K_0,K_1,...,K_k$ with the property that $\varphi|_{K_j}$ and $\psi|_{K_j}$ are in the same contiguity class for all $j=0,1,...,k.$ If there is no such covering, it is defined to be $\SD(\varphi,\psi)=\infty$.
\end{definition}

\begin{definition} \cite{FTMVV}
Let $K$ be a simplicial complex and $\Omega\subset K$ be a subcomplex. If the inclusion $i:\Omega\hookrightarrow K$ and a constant simplicial map $c_{v_0}: \Omega\rightarrow K$, where $v_0 \in K$ is some fixed vertex, are in the same contiguity class, then $\Omega$ is called categorical.
\end{definition}

\begin{definition} \cite{FTMVV} Let $K$ be a simplicial complex. The simplicial Lusternik-Schnirelmann category $\scat(K)$ is the least integer $k\geq 0$ such that one can find categorical subcomplexes $K_0,K_1,\dots, K_k$ of $K$ covering $K$.
\end{definition}

\begin{definition}
 The simplicial complexes $K$ and $K'$ have the same strong homotopy type if there exist simplicial maps $\varphi:K\to K'$ and $\psi:K'\to K$ such that $\varphi\circ \psi\sim id|_{K'}$ and $\psi\circ\varphi\sim id|_{K}$ and it is denoted by $K\sim K'$.
\end{definition} 

\begin{definition}
    Let $v_{0}$ be a vertex in a simplicial complex $K$. K is strong collapsible if $K$ and $v_{0}$ have the same strong homotopy type, i.e $K\sim v_{0}$. 
\end{definition}

\section{Sequential $m$-contiguity distance}

Following the main ideas of \cite{EYB} and \cite{EYKB}, one can naturally define the sequential m-contiguity distance of simplicial maps. 

\begin{definition}
For simplicial maps $\varphi_1,\dots,\varphi_n:K\rightarrow K'$, the sequential m-contiguity distance denoted by $\SD_m(\varphi_1,\dots,\varphi_n)$, is the least integer $k\geq 0$ such that there exists a covering of $K$ by subcomplexes $K_0,\dots,K_k$ with the property that $\varphi_1\circ\eta,\varphi_2\circ\eta,\dots,\varphi_n\circ\eta$ are in the same contiguity class for any simplicial map $\eta:P\rightarrow K_j$ from an $m$-dimesional simplicial complex $P$.
\end{definition}

For the completeness of the work, we state the following easy proposition without proof.  

\begin{proposition} \label{Prop1}
	For simplicial maps $\varphi_1,\dots,\varphi_n:K\rightarrow K'$, we have the following
\begin{itemize}
	    \item[(1)]  $\SD_m(\varphi_1,\dots,\varphi_n)=\SD_m(\varphi_{\sigma(1)},\dots,\varphi_{\sigma(n)})$ holds for any permutation $\sigma$ of $\{1,\dots,n\}$.
	\item[(2)]  If $ \varphi_i \sim\varphi_{i+1}$ for each $i \in \{1,\dots,n-1\}$ then $\SD_m(\varphi_1,\dots,\varphi_n)=0.$
    \item[(3)] $\SD_m(\varphi_1,\dots,\varphi_n)\leq \SD(\varphi_1,\dots,\varphi_n).$
    \item[(4)] $\SD_m(\varphi_1,\dots,\varphi_t)\leq \SD_m(\varphi_1,\dots,\varphi_n)$ provided that $1<t<n$.
    \item[(5)] If $t\leq m$, then $\SD_t(\varphi_1,\dots,\varphi_n)\leq \SD_m(\varphi_1,\dots,\varphi_n).$ 
    \end{itemize}
\end{proposition}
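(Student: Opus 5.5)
Each of the five items reduces to producing a covering of $K$ that works for one distance from a covering that works for another. In every case the covering is left unchanged; only the contiguity condition on the pieces $K_j$ has to be checked. I will treat the items in order. Throughout, $\sim$ is an equivalence relation, since it is the transitive closure of the reflexive and symmetric relation $\sim_c$.

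\textbf{Items (1) and (2).} For (1), the condition ``$\varphi_1\circ\eta,\dots,\varphi_n\circ\eta$ are in the same contiguity class'' says that all $n$ maps lie in a single $\sim$-class. This is symmetric in the indices, so any covering witnessing $\SD_m(\varphi_1,\dots,\varphi_n)\le k$ also witnesses $\SD_m(\varphi_{\sigma(1)},\dots,\varphi_{\sigma(n)})\le k$. Applying the same argument to $\sigma^{-1}$ gives equality.

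For (2), take the trivial covering $K_0=K$. Contiguity is preserved by precomposition: if $\varphi\sim_c\psi$ and $\eta:P\to K$ is simplicial, then for a simplex $\{v_0,\dots,v_k\}$ of $P$ the set $\{\varphi\eta(v_i),\psi\eta(v_i)\}$ is the image under $\varphi,\psi$ of the simplex $\eta(\{v_0,\dots,v_k\})$ of $K$, hence a simplex of $K'$. Consequently $\varphi\sim\psi$ implies $\varphi\circ\eta\sim\psi\circ\eta$. The chain $\varphi_1\sim\varphi_2\sim\cdots\sim\varphi_n$ together with transitivity then places all the maps $\varphi_i\circ\eta$ in one class, so $\SD_m(\varphi_1,\dots,\varphi_n)=0$.

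\textbf{Items (3), (4) and (5).} For (3), I use the higher contiguity distance $\SD(\varphi_1,\dots,\varphi_n)$ of \cite{EYB}: the least $k$ for which there is a covering $K_0,\dots,K_k$ with $\varphi_1|_{K_j},\dots,\varphi_n|_{K_j}$ all in the same contiguity class. Let $\eta:P\to K_j$ with $P$ of dimension $m$, and write $\iota:K_j\hookrightarrow K$ for the inclusion. Then $\varphi_i\circ\eta=(\varphi_i|_{K_j})\circ\eta$, and the precomposition fact from (2), applied with target complex $K_j$, shows that all the $\varphi_i\circ\eta$ are in one class. So the same covering works for $\SD_m$.

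For (4), a covering that works for all $n$ maps works for any subfamily, because all $n$ maps lying in one class certainly implies that the first $t$ do.

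For (5), let $\eta:P\to K_j$ with $\dim P=t\le m$. I need to turn this into a test map from an $m$-dimensional complex. Set $P'=P\sqcup\Delta^m$ and extend $\eta$ to $\eta':P'\to K_j$ by sending $\Delta^m$ to a single vertex of $K_j$; this is simplicial. By the hypothesis on the covering for $\SD_m$, the maps $\varphi_i\circ\eta'$ are all in one contiguity class. Restricting a chain of contiguities to the subcomplex $P\subset P'$ preserves contiguity, so the $\varphi_i\circ\eta$ are in one class as well. (If $K_j$ is empty, there is nothing to check.)

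\textbf{Main obstacle.} The only step that is not purely formal is (5). The difficulty lies in reading ``$m$-dimensional'' as exactly dimension $m$ rather than at most $m$, and in justifying the padding trick with $\Delta^m$ together with the restriction of contiguity chains to subcomplexes. The precomposition lemma used in (2) and (3) is the other ingredient, but it is routine.
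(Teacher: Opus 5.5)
Your proposal is correct. The paper states this proposition without proof, calling it easy, so there is no competing argument to compare against. Your item-by-item checks are exactly the routine verifications the authors omitted: in each case you keep the covering fixed and use that $\sim$ is preserved under precomposition and under restriction to subcomplexes.

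One small point in (5). The paper's standing convention is that all simplicial complexes are edge-path connected, and its $\SD_0$ example effectively takes the $0$-dimensional test complex to be a single vertex. Under that convention $P\sqcup\Delta^m$ need not be an admissible test complex. You can avoid this by wedging $\Delta^m$ to $P$ at a vertex $v$ and sending all of $\Delta^m$ to $\eta(v)$. The resulting complex is still $m$-dimensional, contains $P$ as a subcomplex, and your restriction argument then goes through unchanged.
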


The other direction of Proposition \ref{Prop1}(2) does not hold in general, as the following example shows.  

\begin{example}
    Consider the simplicial complex $K$ given in Figure~\ref{f1}. Let $\varphi_1:K\to K$ be a simplicial map defined  by

\[    \varphi_1(\{0\})=\varphi_1(\{3\})=\varphi_1(\{4\})=\{0\}, \hspace{0.1in}\varphi_1(\{1\})=\{1\}, \hspace{0.1in}\varphi_1(\{2\})=\{2\}
    \]

and consider the constant simplicial maps $\varphi_2, \varphi_3:K\rightarrow K$ at the vertices $\{0\}$ and $\{1\}$, respectively. 

Let $P$ be an arbitrary 1-dimensional simplicial complex and $\eta:P\to K$ be an arbitrary simplicial map.  

For any $i,j=1,2,3$ and for any $\sigma\in P,$ 
$\varphi_i\circ\eta(\sigma)\cup\varphi_j\circ\eta(\sigma)$ is either a 1-simplex or 0-simplex and it must be contained in $K$ since $K$ is a complete graph.



 Thus, $\varphi_1\circ\eta\sim\varphi_2\circ\eta\sim\varphi_3\circ\eta.$ So we conclude that $\SD_1(\varphi_1,\varphi_2,\varphi_3)=0.$ 

On the other hand, following from the Example 2.3 in \cite{EYB}, we know that $\varphi_i\nsim\varphi_j$ for each distinct $i,j \in \{1,2,3\}$.

\newcommand\size{1}
\begin{figure}
  
\begin{tikzpicture}
     \def\size{2} 
    \draw[thick]  (18:\size) \foreach \a [count=\i] in {90,162,234,306}{ -- (\a:\size) } -- cycle;
    
    \draw[thick] (18:\size) \foreach \a [count=\i] in {162,306,90,234} { -- (\a:\size) } -- cycle;
    
    \foreach \i/\a in {0/18, 1/90, 2/162, 3/234, 4/306} {
        \node[black, fill=black, circle, inner sep=2pt, label={[label distance=-3pt] \a:\i}] at (\a:\size) {};
    }
    
\end{tikzpicture} 
\caption{} \label{f1}
\end{figure} 
\end{example}

\begin{proposition} \label{prop32}
	Given simplicial maps $\varphi_i:K\rightarrow K'$ and $\psi_i:K\rightarrow K'$ for $i\in \{1,...,n\}.$ If $\varphi_i\sim\psi_i$ for each i, then $\SD_m(\varphi_1,...,\varphi_n)=\SD_m(\psi_1,...,\psi_n).$
\end{proposition}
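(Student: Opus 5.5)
The plan is to show that any covering witnessing $\SD_m(\varphi_1,\dots,\varphi_n)\le k$ also witnesses $\SD_m(\psi_1,\dots,\psi_n)\le k$. Since the hypothesis $\varphi_i\sim\psi_i$ is symmetric, the reverse inequality then follows by swapping the roles of the two families, and together they give equality. The case where one side is $\infty$ is covered automatically: if no covering exists for one family, then none exists for the other either.

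The key lemma is that precomposition preserves contiguity classes. Suppose $\varphi\sim_c\psi:K\to K'$ and $\eta:P\to K$ is simplicial. Take a simplex $\{p_0,\dots,p_r\}$ of $P$. Its image $\{\eta(p_0),\dots,\eta(p_r)\}$ is a simplex of $K$, possibly with repeated vertices. By contiguity, the set $\{\varphi\eta(p_0),\dots,\varphi\eta(p_r),\psi\eta(p_0),\dots,\psi\eta(p_r)\}$ is a simplex of $K'$. Hence $\varphi\circ\eta\sim_c\psi\circ\eta$. Applying this to each link of a chain $\varphi_i=\chi_1\sim_c\chi_2\sim_c\dots\sim_c\chi_s=\psi_i$ of maps $K\to K'$ shows that $\varphi_i\sim\psi_i$ implies $\varphi_i\circ\eta\sim\psi_i\circ\eta$.

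Here $\eta:P\to K_j$ is a map into a subcomplex, so I compose it with the inclusion $K_j\hookrightarrow K$. Concretely, $\varphi_i\circ\eta$ means $\varphi_i|_{K_j}\circ\eta$, which equals $\varphi_i\circ\iota_j\circ\eta$ with $\iota_j:K_j\hookrightarrow K$ the inclusion. The lemma therefore applies to $\iota_j\circ\eta:P\to K$.

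For the main argument, let $K_0,\dots,K_k$ be a covering realising $\SD_m(\varphi_1,\dots,\varphi_n)=k$, and fix an arbitrary $\eta:P\to K_j$ with $\dim P=m$. By assumption all the $\varphi_i\circ\eta$ lie in one contiguity class. By the lemma, $\psi_i\circ\eta\sim\varphi_i\circ\eta$ for each $i$. Since $\sim$ is an equivalence relation (chains can be concatenated and reversed), we get $\psi_1\circ\eta\sim\varphi_1\circ\eta\sim\varphi_i\circ\eta\sim\psi_i\circ\eta$ for every $i$, so all the $\psi_i\circ\eta$ lie in the same contiguity class. Thus the same covering works for the $\psi_i$, and $\SD_m(\psi_1,\dots,\psi_n)\le k$.

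The only real content is the precomposition lemma, and I don't expect a serious obstacle. The one point needing care is that $\eta$ may collapse vertices, so the image of a simplex of $P$ is a face of some simplex of $K$ rather than a simplex of full size. Because simplices are closed under taking subsets and the contiguity condition applies to every simplex of $K$, this causes no problem.
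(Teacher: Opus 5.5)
Your proof is correct and is essentially the paper's argument. You take the covering witnessing $\SD_m(\varphi_1,\dots,\varphi_n)=k$, use $\varphi_i\circ\eta\sim\psi_i\circ\eta$ with transitivity to show the same covering works for the $\psi_i$, and get equality by symmetry; you are more careful than the paper, which uses the precomposition property and the reverse inequality without comment.
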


\begin{proof}
    Let $\SD_m(\varphi_1,...,\varphi_n)=k.$ Then there exist subcomplexes $K_0,...,K_k$ of $K$ such that each $K_j$ has the property that $\varphi_i\circ\eta\sim\varphi_{i+1}\circ\eta$ for any map $\eta:P\to K_j$ from $m$ dimensional simplicial complex P and for each $i\in \{1,\dots,n-1\}.$ Moreover, by assumption, we have $\varphi_i\circ\eta\sim\psi_i\circ\eta.$ Hence we conclude that $\SD_m(\psi_1,...,\psi_n)\leq k.$
    
\end{proof}


\begin{proposition} \label{3.3}
	Let $\varphi_1,\varphi_2,\ldots,\varphi_n:K\rightarrow K'$ be simplicial maps and $\mu:M\rightarrow K$ be a simplicial map. Then we have 
	$$ \SD_m(\varphi_1\circ\mu,...,\varphi_n\circ\mu)\leq \SD_m(\varphi_1,...,\varphi_n). $$
\end{proposition}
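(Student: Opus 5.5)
The plan is to pull back an optimal covering of $K$ along $\mu$. Suppose $\SD_m(\varphi_1,\dots,\varphi_n)=k$; if it is infinite there is nothing to prove. Then there are subcomplexes $K_0,\dots,K_k$ covering $K$ such that, for every $j$ and every simplicial map $\eta:P\to K_j$ from an $m$-dimensional simplicial complex $P$, the maps $\varphi_1\circ\eta,\dots,\varphi_n\circ\eta$ are in the same contiguity class. For each $j$ we set
\[
M_j=\mu^{-1}(K_j)=\{\sigma\in M : \mu(\sigma)\in K_j\}.
\]

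\textbf{Step 1.} Each $M_j$ is a subcomplex of $M$. If $\tau\subseteq\sigma$ and $\mu(\sigma)\in K_j$, then $\mu(\tau)\subseteq\mu(\sigma)$, so $\mu(\tau)$ is a face of a simplex of $K_j$. Since $K_j$ is a subcomplex, $\mu(\tau)\in K_j$.

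\textbf{Step 2.} The $M_j$ cover $M$. For any simplex $\sigma$ of $M$, the image $\mu(\sigma)$ is a simplex of $K$. Because the $K_j$ cover $K$, the simplex $\mu(\sigma)$ lies in some $K_j$, and then $\sigma\in M_j$.

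\textbf{Step 3.} Let $\eta':P\to M_j$ be a simplicial map from an $m$-dimensional complex $P$. Then $\mu|_{M_j}\circ\eta'$ is a simplicial map $P\to K_j$. This is the key observation, and it is exactly why the definition quantifies over all maps from $m$-dimensional complexes. By the defining property of $K_j$, the maps $\varphi_1\circ(\mu\circ\eta'),\dots,\varphi_n\circ(\mu\circ\eta')$ are in the same contiguity class. By associativity these are the maps $(\varphi_i\circ\mu)\circ\eta'$.

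So $M_0,\dots,M_k$ is a covering of $M$ witnessing $\SD_m(\varphi_1\circ\mu,\dots,\varphi_n\circ\mu)\le k$. There is no real obstacle. The only point needing care is Step 1, where we must make sure the preimage is a genuine subcomplex. The possibility that some $M_j$ is empty is harmless: we can either drop it or note that the condition holds vacuously.
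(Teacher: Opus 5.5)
Your proposal is correct and follows the paper's proof: pull back the optimal cover along $\mu$ via $M_j=\mu^{-1}(K_j)$, and observe that $\mu\circ\eta'$ is a map $P\to K_j$, so the $\varphi_i\circ\mu\circ\eta'$ lie in one contiguity class. You also explicitly verify that the $M_j$ are subcomplexes and that they cover $M$, which the paper leaves implicit.
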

\begin{proof}
    Let $\SD_m(\varphi_1,...,\varphi_n)=k.$ Then there exist subcomplexes $K_0,...,K_k$ of $K$ such that each $K_j$ has the property that $\varphi_1\circ\eta\sim\dots\sim\varphi_n\circ\eta$  for any map $\eta:P\to K_j$ from $m$-dimensional simplicial complex $P$. Define $M_j:=\mu^{-1}(K_j)$. Take any simplicial map $\eta':P\to M_j.$  Then 
    $$\varphi_s\circ\mu\circ\eta'\sim\varphi_t\circ\mu\circ\eta'$$
    for all $s,t\in \{1,...,n\}$, since $\mu\circ \eta'$ is a map from $P$ to $K_j$. Therefore  $\SD_m(\varphi_1\circ\mu,...,\varphi_n\circ\mu)\leq k.$
\end{proof}

\begin{proposition}\label{right}
    Let $\varphi_1,\dots,\varphi_n:K\to K'$ and $\mu_1,\dots,\mu_n:M\to K$ be simplicial maps. If $\mu_1\sim\dots\sim\mu_n$, then we have $$\SD_m(\varphi_1\circ\mu_1,\dots,\varphi_n\circ\mu_n)\leq \SD_m(\varphi_1,\dots,\varphi_n)$$ 
\end{proposition}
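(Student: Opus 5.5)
The plan is to reduce the statement to the single-map version, Proposition \ref{3.3}, and then use Proposition \ref{prop32} to replace the maps $\mu_i$ by one common map. Fix one of the maps, say $\mu:=\mu_1$. By hypothesis $\mu_i\sim\mu$ for every $i\in\{1,\dots,n\}$.

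The first step is to show that $\varphi_i\circ\mu_i\sim\varphi_i\circ\mu$ for each $i$. For this I will use the standard fact that contiguity is preserved under post-composition. Suppose $\alpha\sim_c\beta:M\to K$ and $\sigma=\{v_0,\dots,v_k\}$ is a simplex of $M$. Then $\{\alpha(v_0),\dots,\alpha(v_k),\beta(v_0),\dots,\beta(v_k)\}$ is a simplex $\tau$ of $K$. Its image $\varphi_i(\tau)$ is a simplex of $K'$, and this image is exactly the union of $\varphi_i\circ\alpha(\sigma)$ and $\varphi_i\circ\beta(\sigma)$. Hence $\varphi_i\circ\alpha\sim_c\varphi_i\circ\beta$. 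Applying this along the chain of contiguous maps joining $\mu_i$ to $\mu$ gives $\varphi_i\circ\mu_i\sim\varphi_i\circ\mu$.

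The second step applies Proposition \ref{prop32} to the two families $\varphi_i\circ\mu_i$ and $\varphi_i\circ\mu$, which are pairwise in the same contiguity class. This yields
$$\SD_m(\varphi_1\circ\mu_1,\dots,\varphi_n\circ\mu_n)=\SD_m(\varphi_1\circ\mu,\dots,\varphi_n\circ\mu).$$

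The third step is Proposition \ref{3.3} with the single map $\mu$, which bounds the right-hand side by $\SD_m(\varphi_1,\dots,\varphi_n)$. Combining the two steps gives the claimed inequality.

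I do not expect a real obstacle. The only point needing care is the first step, that contiguity classes are stable under post-composition. This is not stated explicitly earlier in the paper, so I will include the short verification above.
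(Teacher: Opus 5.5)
Your proof is correct and is essentially the paper's argument. The paper pulls the cover back along $\mu_1$, rerunning the proof of Proposition~\ref{3.3} inline, and then swaps $\mu_1$ for $\mu_i$ using $\mu_1\sim\mu_i$; your appeal to Propositions~\ref{prop32} and~\ref{3.3} packages exactly these two steps, and your explicit check that post-composition with $\varphi_i$ preserves contiguity fills in what the paper leaves implicit in ``using $\mu_1\sim\dots\sim\mu_n$''.
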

\begin{proof}
    Let $\SD_m(\varphi_1,\dots,\varphi_n)=k.$ Then there exist subcomplexes $K_0,\dots,K_k$ of $K$ such that each $K_j$ has the property that for any map $\eta:P\to K_j$ from $m$-dimensional simplicial complex $P$, $\varphi_1\circ\eta\sim\dots\sim\varphi_n\circ\eta.$ Define $M_j:=\mu_1^{-1}(K_j)$ and take any simplicial map $\eta':P\to M_j.$ Then 
$$\varphi_s\circ\mu_1\circ\eta'\sim\varphi_t\circ\mu_1\circ\eta'$$
    for all $s,t\in \{1,...,n\}$, since $\mu_1\circ \eta'$ is a map from $P$ to $K_j$. Using $\mu_1\sim\dots\sim\mu_n$, we conclude that 
    $\SD_m(\varphi_1\circ\mu_1,\dots,\varphi_n\circ\mu_n)\leq k.$
    
\end{proof}

\begin{corollary}
    Let $\mu_{1},\cdots,\mu_{n}:M\to K$ be simplicial maps that satisfy the condition in Proposition \ref{right} which have right strong homotopy inverse. Then for simplicial maps $\varphi_{1},\cdots,\varphi_{n}:K\to K'$,
    $$\SD_m(\varphi_1\circ\mu_1,\dots,\varphi_n\circ\mu_n)= \SD_m(\varphi_1,\dots,\varphi_n)$$
\end{corollary}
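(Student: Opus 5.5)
One inequality is Proposition \ref{right}: since $\mu_1\sim\dots\sim\mu_n$, it gives
$$\SD_m(\varphi_1\circ\mu_1,\dots,\varphi_n\circ\mu_n)\leq \SD_m(\varphi_1,\dots,\varphi_n).$$
So only the reverse inequality needs proof. For this I will use the right strong homotopy inverse. I read the hypothesis as giving a simplicial map $\nu:K\to M$ with $\mu_1\circ\nu\sim \id_K$. Because the $\mu_i$ all lie in one contiguity class, and contiguity classes are preserved under precomposition, we then also get $\mu_i\circ\nu\sim\mu_1\circ\nu\sim\id_K$ for every $i$.

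Next I apply Proposition \ref{3.3} to the maps $\varphi_i\circ\mu_i:M\to K'$, precomposed with $\nu:K\to M$. This gives
$$\SD_m(\varphi_1\circ\mu_1\circ\nu,\dots,\varphi_n\circ\mu_n\circ\nu)\leq \SD_m(\varphi_1\circ\mu_1,\dots,\varphi_n\circ\mu_n).$$

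For each $i$ we have $\mu_i\circ\nu\sim \id_K$. Postcomposing with $\varphi_i$ preserves the contiguity class, so $\varphi_i\circ\mu_i\circ\nu\sim\varphi_i$. Proposition \ref{prop32} then lets us replace each $\varphi_i\circ\mu_i\circ\nu$ by $\varphi_i$, so the left-hand side above equals $\SD_m(\varphi_1,\dots,\varphi_n)$. Combined with the first inequality, this gives equality.

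The only delicate point is the compatibility of contiguity classes with composition, on both sides. If $f\sim_c g$, then $h\circ f\sim_c h\circ g$ holds because simplicial maps send simplices to simplices, and $f\circ h\sim_c g\circ h$ holds directly from the definition. Chaining these single contiguity steps extends both facts from $\sim_c$ to $\sim$. I also need to fix the intended meaning of "right strong homotopy inverse", namely $\mu_1\circ\nu\sim\id_K$. If it were meant only for each $\mu_i$ separately, with possibly different inverses $\nu_i$, the argument still works with $\nu=\nu_1$, since $\mu_i\circ\nu_1\sim\mu_1\circ\nu_1\sim\id_K$.
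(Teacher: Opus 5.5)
Your proposal is correct and is essentially the paper's proof. The paper likewise takes one right inverse $\alpha$ for all $\mu_i$ (using $\mu_i\sim\mu_{i+1}$) and chains $\SD_m(\varphi_1,\dots,\varphi_n)=\SD_m(\varphi_1\circ\mu_1\circ\alpha,\dots,\varphi_n\circ\mu_n\circ\alpha)\leq\SD_m(\varphi_1\circ\mu_1,\dots,\varphi_n\circ\mu_n)\leq\SD_m(\varphi_1,\dots,\varphi_n)$ via Propositions \ref{prop32} and \ref{right}. Your use of Proposition \ref{3.3} for the middle step is just the special case of Proposition \ref{right} with all precomposing maps equal, and your check that contiguity classes are stable under pre- and postcomposition fills in a step the paper leaves implicit.
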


\begin{proof}
    Since $\mu_i$ has right homotopy inverse (for all $i=1,\dots,n$), we have $\mu_i\circ\alpha\sim\id_K.$ Moreover, since $\mu_i\sim \mu_{i+1}$ for all $i$, the same $\alpha$ can be used for each $i$. It follows that $\varphi_i\circ\mu_i\circ\alpha\sim\varphi_i.$ Thus,
    \begin{eqnarray*}
	\SD_m(\varphi_1,...,\varphi_n)&=&\SD_m(\varphi_1\circ\mu_1\circ\alpha,...,\varphi_n\circ\mu_n\circ\alpha)\\
	&\leq& \SD_m(\varphi_1\circ\mu_1,...,\varphi_n\circ\mu_n)\\
	&\leq& \SD_m(\varphi_1,...,\varphi_n)
    \end{eqnarray*}
     where the equality follows from Proposition \ref{prop32} and the inequalities follow from Proposition \ref{right}. Hence, we have $\SD_m(\varphi_1\circ \mu_1,...,\varphi_n\circ\mu_n)=\SD_m(\varphi_1,...,\varphi_n).$
\end{proof}

\begin{proposition} \label{left}
     Let $\varphi_1,\dots,\varphi_n:K\to K'$ and $\mu_1,\dots,\mu_n:K'\to M$ be simplicial maps. If $\mu_1\sim\dots\sim\mu_n$, then we have $$\SD_m(\mu_1\circ\varphi_1,\dots,\mu_n\circ\varphi_n)\leq \SD_m(\varphi_1,\dots,\varphi_n)$$ 
\end{proposition}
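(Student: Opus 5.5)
The plan is to use the covering that realises $\SD_m(\varphi_1,\dots,\varphi_n)$ unchanged, and to show that post-composition with the $\mu_i$ preserves the defining property on each piece. Note that here, unlike Proposition \ref{right}, no pulled-back covering is needed, since the domain $K$ stays the same.

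Let $\SD_m(\varphi_1,\dots,\varphi_n)=k$ (if it is $\infty$ there is nothing to prove). Take subcomplexes $K_0,\dots,K_k$ covering $K$ such that for every simplicial map $\eta:P\to K_j$ from an $m$-dimensional complex $P$ we have
$$\varphi_1\circ\eta\sim\varphi_2\circ\eta\sim\dots\sim\varphi_n\circ\eta .$$
It then suffices to check that, for such $\eta$, all the maps $\mu_i\circ\varphi_i\circ\eta$ lie in the same contiguity class.

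This rests on two standard compatibility facts for contiguity classes. \textbf{First}, post-composition preserves contiguity: if $\alpha\sim_c\beta:P\to K'$ and $\mu:K'\to M$ is simplicial, then for each simplex $\sigma$ of $P$, the set $\alpha(\sigma)\cup\beta(\sigma)$ is a simplex of $K'$. Its image under $\mu$, namely $\mu\alpha(\sigma)\cup\mu\beta(\sigma)$, is therefore a simplex of $M$. Chaining this along a sequence of contiguities gives $\mu\circ\alpha\sim\mu\circ\beta$. \textbf{Second}, pre-composition preserves contiguity: if $\mu\sim_c\mu'$, then $\mu\circ\gamma\sim_c\mu'\circ\gamma$ for any simplicial $\gamma:P\to K'$. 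This is because for a simplex $\sigma$ of $P$, $\gamma(\sigma)$ is a simplex of $K'$, and contiguity of $\mu,\mu'$ applied to it says exactly that $\mu\gamma(\sigma)\cup\mu'\gamma(\sigma)$ is a simplex. Chaining again gives $\mu\circ\gamma\sim\mu'\circ\gamma$.

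With these facts, for each $i\in\{1,\dots,n-1\}$ we get
$$\mu_i\circ\varphi_i\circ\eta\sim\mu_i\circ\varphi_{i+1}\circ\eta\sim\mu_{i+1}\circ\varphi_{i+1}\circ\eta.$$
The first relation applies the first fact to $\varphi_i\circ\eta\sim\varphi_{i+1}\circ\eta$. The second applies the second fact to $\mu_i\sim\mu_{i+1}$ with $\gamma=\varphi_{i+1}\circ\eta$. By transitivity, all $\mu_i\circ\varphi_i\circ\eta$ are in the same contiguity class. Hence $K_0,\dots,K_k$ witnesses $\SD_m(\mu_1\circ\varphi_1,\dots,\mu_n\circ\varphi_n)\leq k$.

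There is no real obstacle. The only point needing care is the two compatibility facts, and in particular that one must pass from single contiguities to contiguity classes by chaining.
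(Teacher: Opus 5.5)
Your proof is correct and follows the paper's argument: you reuse the same cover of $K$, and your chain $\mu_i\circ\varphi_i\circ\eta\sim\mu_i\circ\varphi_{i+1}\circ\eta\sim\mu_{i+1}\circ\varphi_{i+1}\circ\eta$ is exactly the paper's two-step chain $\mu_s\circ\varphi_{s'}\circ\eta\sim\mu_s\circ\varphi_{t'}\circ\eta\sim\mu_t\circ\varphi_{t'}\circ\eta$. You also spell out why contiguity classes are preserved under pre- and post-composition, which the paper uses without comment.
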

 \begin{proof}
     Suppose that $\SD_m(\varphi_1,\dots,\varphi_n)=k.$ Then there exist subcomplexes $K_0,\dots,K_k$ of $K$ such that each $K_j$ has the property that for any map $\eta:P\to K_j$ from $m$-dimensional simplicial complex $P$, $\varphi_1\circ\eta\sim\dots\sim\varphi_n\circ\eta.$ Then
     $$\mu_s\circ\varphi_{s'}\circ\eta\sim\mu_s\circ\varphi_{t'}\circ\eta\sim\mu_t\circ\varphi_{t'}\circ\eta$$
     for all $s,s',t,t'\in \{1,\dots,n\}.$ So $\SD_ m(\mu_1\circ\varphi_1,\dots,\mu_n\circ\varphi_n)\leq k.$
 \end{proof}

 \begin{corollary}
    Let $\mu_{1},\cdots,\mu_{n}:K'\to M$ be simplicial maps that satisfy the condition in Proposition \ref{left} which have left strong homotopy inverse. Then for simplicial maps $\varphi_{1},\cdots,\varphi_{n}:K\to K'$,
    $$\SD_ m(\mu_1\circ\varphi_1,\dots,\mu_n\circ\varphi_n)= \SD_m(\varphi_1,\dots,\varphi_n).$$
\end{corollary}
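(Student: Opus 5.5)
The plan is to mirror the proof of the preceding corollary, with Proposition \ref{left} now playing the role that Proposition \ref{right} played there. Since each $\mu_i$ has a left strong homotopy inverse, for each $i$ there is a simplicial map $\beta_i:M\to K'$ with $\beta_i\circ\mu_i\sim \id_{K'}$.

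The first step is to see that one inverse works for all $i$. Fix $\beta:=\beta_1$. Because $\mu_1\sim\mu_i$ and composition preserves contiguity classes (if $f\sim_c g$ then $h\circ f\sim_c h\circ g$, since a simplicial map sends simplices to simplices), we get
$$\beta\circ\mu_i\sim\beta\circ\mu_1\sim \id_{K'}.$$
It then follows that $\beta\circ\mu_i\circ\varphi_i\sim\varphi_i$ for every $i$.

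The second step is a sandwich argument. By Proposition \ref{prop32},
$$\SD_m(\varphi_1,\dots,\varphi_n)=\SD_m(\beta\circ\mu_1\circ\varphi_1,\dots,\beta\circ\mu_n\circ\varphi_n).$$
Apply Proposition \ref{left} with the constant family $\beta,\dots,\beta$, which is trivially in one contiguity class, to the maps $\mu_i\circ\varphi_i$. This bounds the right-hand side by $\SD_m(\mu_1\circ\varphi_1,\dots,\mu_n\circ\varphi_n)$. A second application of Proposition \ref{left}, using the hypothesis $\mu_1\sim\dots\sim\mu_n$, bounds that in turn by $\SD_m(\varphi_1,\dots,\varphi_n)$. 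Hence every quantity in the chain is equal, which proves the claim.

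The only point needing care is the first step, that a single $\beta$ serves as a left inverse for all $\mu_i$ simultaneously. This rests on contiguity classes being stable under post-composition, which follows directly from the definition of contiguity. Everything else is a formal consequence of Propositions \ref{prop32} and \ref{left}.
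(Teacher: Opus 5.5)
Your proposal is correct and follows essentially the same route as the paper: fix one left inverse $\beta$, use Proposition~\ref{prop32} to replace $\varphi_i$ by $\beta\circ\mu_i\circ\varphi_i$, then sandwich with two applications of Proposition~\ref{left}. The only difference is that you spell out why a single $\beta$ works, namely $\beta\circ\mu_i\sim\beta\circ\mu_1\sim\id_{K'}$ because post-composition preserves contiguity, whereas the paper simply asserts this step.
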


\begin{proof}
    If $\mu_i$ has left strong homotopy inverse (for all $i$), then $\beta\circ\mu_i\sim id_{K'}.$ Since $\mu_i\sim\mu_{i+1}$ for all $i$, the same $\beta$ can be used for each $i$. It follows that $\beta\circ\mu_i\circ\varphi_i\sim\varphi_i.$ Thus
    \begin{eqnarray*}
		\SD_m(\varphi_1,...,\varphi_n)&=& \SD_m(\beta\circ\mu_1\circ\varphi_1,...,\beta\circ\mu_n\circ\varphi_n) \\
		&\leq& \SD_m(\mu_1\circ\varphi_1,...,\mu_n\circ\varphi_n) \\
		&\leq& \SD_m(\varphi_1,...,\varphi_n)
	\end{eqnarray*}
	where the equality follows from Proposition~\ref{prop32} and the inequalities follow from Proposition~\ref{left}. Hence, we have $ \SD_m(\mu_1\circ\varphi_1,...,\mu_n\circ\varphi_n)=\SD_m(\varphi_1,...,\varphi_n). $
\end{proof}







The preceding propositions show that $\SD_{m}$ is invariant under composition with simplicial maps admitting strong homotopy inverses. We now combine these results to obtain the strong homotopy invariance of $\SD_{m}$.

\begin{theorem}\label{mSD}
   Let $\beta_1,\dots,\beta_n:K'\to K$ and $\alpha_1,\dots,\alpha_n:L\to L'$ has right and left strong homotopy inverses, respectively. Assume in addition that $\beta_i\sim\beta_j $ and $\alpha_i\sim\alpha_j$ for all $i,j.$ If the simplicial maps $\varphi_1,\dots,\varphi_n:K\to L$ and $\psi_1,\dots,\psi_n:K'\to L'$ make the following diagram commutative up to contiguity for each $j=1,\dots,n$, then we have $\SD_m(\varphi_1,\dots,\varphi_n)=\SD_m(\psi_1,\dots,\psi_n).$
   \begin{displaymath}
	\xymatrix{
		K \ar[r]^{\varphi_j}  &
		L \ar[d]^{\alpha_j} \\
		K' \ar[r]_{\psi_j} \ar[u]_{\beta_j} & L' }
	\end{displaymath}
\end{theorem}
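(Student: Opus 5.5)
The plan is to chain the two corollaries following Propositions \ref{right} and \ref{left} with Proposition \ref{prop32}. Commutativity up to contiguity of the square means $\alpha_j\circ\varphi_j\circ\beta_j\sim\psi_j$ for each $j$. Proposition \ref{prop32} then gives
\[
\SD_m(\psi_1,\dots,\psi_n)=\SD_m(\alpha_1\circ\varphi_1\circ\beta_1,\dots,\alpha_n\circ\varphi_n\circ\beta_n).
\]

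First I strip off the $\beta_j$'s. Each $\beta_j:K'\to K$ has a right strong homotopy inverse, and $\beta_i\sim\beta_j$ for all $i,j$, so the $\beta_j$ satisfy the hypotheses of the first corollary (the one after Proposition \ref{right}). The maps being precomposed are $\alpha_j\circ\varphi_j:K\to L'$. The corollary therefore gives
\[
\SD_m(\alpha_1\circ\varphi_1\circ\beta_1,\dots,\alpha_n\circ\varphi_n\circ\beta_n)=\SD_m(\alpha_1\circ\varphi_1,\dots,\alpha_n\circ\varphi_n).
\]

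Next I strip off the $\alpha_j$'s. These have left strong homotopy inverses and are pairwise in the same contiguity class, so the second corollary (after Proposition \ref{left}) applies with $\mu_j=\alpha_j$. It gives $\SD_m(\alpha_1\circ\varphi_1,\dots,\alpha_n\circ\varphi_n)=\SD_m(\varphi_1,\dots,\varphi_n)$. Concatenating the three equalities proves the theorem.

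The only delicate point is the bookkeeping in the corollaries. One needs a \emph{single} inverse $\alpha$ (respectively $\beta$) that works for all indices. This is exactly what the assumptions $\beta_i\sim\beta_j$ and $\alpha_i\sim\alpha_j$ supply: an inverse for one map is an inverse for all of them, because contiguity classes are preserved under composition. I also need to check that Proposition \ref{prop32} applies when only the individual maps are related index-wise, i.e. $\psi_j\sim\alpha_j\varphi_j\beta_j$ for each $j$ separately. That is precisely its hypothesis, so no further obstacle is expected.
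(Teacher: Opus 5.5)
Your proposal is correct and follows the paper's approach. The paper gives no separate proof; it presents the theorem as the combination of Proposition \ref{prop32} with the corollaries following Propositions \ref{right} and \ref{left}, which is exactly your chain of three equalities (including your observation that one strong homotopy inverse serves every index because $\beta_i\sim\beta_j$ and $\alpha_i\sim\alpha_j$).
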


\begin{theorem} \label{dimen}
    For simplicial maps $\varphi_1,\ldots,\varphi_n:K\to K'$ we have $$\SD_{dim(K)}(\varphi_1,\ldots,\varphi_n)=\SD(\varphi_1,\ldots,\varphi_n).$$
\end{theorem}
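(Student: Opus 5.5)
The plan is to prove the two inequalities separately. The inequality $\SD_{\dim(K)}(\varphi_1,\ldots,\varphi_n)\leq \SD(\varphi_1,\ldots,\varphi_n)$ is Proposition~\ref{Prop1}(3) with $m=\dim(K)$. Here $\SD(\varphi_1,\ldots,\varphi_n)$ denotes the higher contiguity distance of \cite{EYB}: the least $k$ such that $K$ is covered by subcomplexes $K_0,\dots,K_k$ with $\varphi_1|_{K_j}\sim\dots\sim\varphi_n|_{K_j}$. The reason is direct. If $\varphi_s|_{K_j}\sim\varphi_t|_{K_j}$ via a chain of contiguous maps $K_j\to K'$, then precomposing the whole chain with any simplicial $\eta:P\to K_j$ gives a chain of contiguous maps $P\to K'$. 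This holds because contiguity is preserved under precomposition: the image under $\eta$ of a simplex of $P$ is a simplex of $K_j$. Hence the same covering works for $\SD_m$, and for every $m$.

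For the reverse inequality, I would set $m=\dim(K)$ and $k=\SD_m(\varphi_1,\ldots,\varphi_n)$, and take a covering $K_0,\dots,K_k$ witnessing it. The natural test map is the inclusion $\eta=\iota_j:K_j\hookrightarrow K_j$, i.e.\ the identity of $K_j$. Then $\varphi_s\circ\iota_j=\varphi_s|_{K_j}$, so the defining property gives $\varphi_1|_{K_j}\sim\dots\sim\varphi_n|_{K_j}$ for each $j$. Hence $K_0,\dots,K_k$ is also a covering for $\SD$, and $\SD\leq k$.

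The main obstacle is that the definition asks for $P$ to be an $m$-dimensional complex, while $\dim K_j$ may be strictly less than $m=\dim(K)$. I would read ``$m$-dimensional'' as ``of dimension at most $m$'', consistent with Proposition~\ref{Prop1}(5), which requires this monotone reading. If a strict reading is insisted on, I would instead take $P=K_j\sqcup \Delta^m$, a disjoint union with an $m$-simplex, and let $\eta$ be the inclusion on $K_j$ and constant at a vertex $v\in K_j$ on $\Delta^m$.

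The point to check in that construction is that restriction preserves contiguity classes. A chain $\varphi_s\circ\eta=\theta_1\sim_c\dots\sim_c\theta_r=\varphi_t\circ\eta$ of maps $P\to K'$ restricts to a chain of maps $K_j\to K'$, since $K_j$ is a subcomplex of $P$. Therefore $\varphi_s|_{K_j}\sim\varphi_t|_{K_j}$. If $P$ is required to be connected, I would instead attach $\Delta^m$ to $K_j$ by identifying one vertex with $v$ and map the simplex constantly to $v$. Restriction to the subcomplex $K_j$ again yields the conclusion. Combining the two inequalities gives the equality.
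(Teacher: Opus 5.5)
Your proposal is correct and follows the paper's route: $\SD_{\dim K}\le\SD$ comes from Proposition~\ref{Prop1}(3), and the reverse inequality comes from testing the defining property on a map $\eta:P\to K_j$ that covers $K_j$. The paper only asks $\eta$ to be surjective, whereas you take $P\supseteq K_j$ with $\eta|_{K_j}=\mathrm{id}$ and descend by restricting the contiguity chain; this is the form actually needed, since surjectivity alone fails when the intermediate maps $P\to K'$ do not factor through $\eta$ (e.g.\ precomposing with a path wrapping once around a $4$-cycle makes the identity of the $4$-cycle and a constant map equivalent, though they are not), and your construction also handles the case $\dim K_j<\dim K$.
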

\color{black}
\begin{proof}
    Let $m=dim(K).$ $\SD_m(\varphi_1,\ldots,\varphi_n)\leq \SD_m(\varphi_1,\ldots,\varphi_n) $ is open from Proposition \ref{Prop1}.
    
    For the converse, assume that $\SD_m(\varphi_1,\ldots,\varphi_n)=k.$ There exist subcomplexes $K_0,\dots,K_k$ of K such that each $K_j$ has the property that any map $\eta:P\to K_j$ from an $m$-dimensional simplicial complex P, $\varphi_1\circ\eta\sim \varphi_2\circ\eta\sim\cdots\sim \varphi_n\circ\eta$. Now choose $\eta$ to be surjective with $K_j\subseteq \operatorname{Im}(\eta)$ for each $j$. It follows that $ \varphi_1|_{K_j}\sim\varphi_2|_{K_j}\sim\cdots\sim\varphi_n|_{K_j} $ for every $j=0,\ldots,k$. This completes the proof.

\end{proof}

\begin{proposition} \label{propmu}
    Let $K,K'$ and $K''$ be simplicial complexes and let $\mu,\mu':K''\to K$ and $\varphi_1,\dots,\varphi_n:K\to K'$ be simplicial maps. If $\varphi_1\circ\mu'\sim\dots\sim\varphi_n\circ\mu',$ then $\SD_m(\varphi_1\circ\mu,\dots,\varphi_n\circ\mu)\leq \SD_m(\mu,\mu').$
\end{proposition}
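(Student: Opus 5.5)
The plan is to transport a covering that witnesses $\SD_m(\mu,\mu')$ directly to the maps $\varphi_i\circ\mu$, and then chain contiguity relations through $\mu'$. Here $\SD_m(\mu,\mu')$ is the case $n=2$ of the definition.

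First, set $k=\SD_m(\mu,\mu')$; if $k=\infty$ there is nothing to prove. Choose subcomplexes $K''_0,\dots,K''_k$ covering $K''$ such that for every $j$ and every simplicial map $\eta:P\to K''_j$ from an $m$-dimensional simplicial complex $P$ we have $\mu\circ\eta\sim\mu'\circ\eta$. This same covering of $K''$ will serve as the candidate covering for $\SD_m(\varphi_1\circ\mu,\dots,\varphi_n\circ\mu)$.

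Next, fix $j$, a complex $P$ of dimension $m$, and a simplicial map $\eta:P\to K''_j$. We need $\varphi_s\circ\mu\circ\eta\sim\varphi_t\circ\mu\circ\eta$ for all $s,t$. Two standard facts about contiguity classes do the work.

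\begin{enumerate}
\item Post-composition preserves contiguity. If $f\sim_c g$, then for any simplex $\sigma$ we have $\varphi(f(\sigma)\cup g(\sigma))=\varphi f(\sigma)\cup\varphi g(\sigma)$, and the image of a simplex under a simplicial map is a simplex. Iterating along a chain gives $\varphi_s\circ\mu\circ\eta\sim\varphi_s\circ\mu'\circ\eta$ from $\mu\circ\eta\sim\mu'\circ\eta$.
\item Pre-composition preserves contiguity, because $\eta$ sends simplices to simplices. So the hypothesis $\varphi_s\circ\mu'\sim\varphi_t\circ\mu'$ yields $\varphi_s\circ\mu'\circ\eta\sim\varphi_t\circ\mu'\circ\eta$.
\end{enumerate}

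Chaining these gives
$$\varphi_s\circ\mu\circ\eta\sim\varphi_s\circ\mu'\circ\eta\sim\varphi_t\circ\mu'\circ\eta\sim\varphi_t\circ\mu\circ\eta.$$
Since $\sim$ is an equivalence relation, all the maps $\varphi_i\circ\mu\circ\eta$ lie in one contiguity class. Hence $\SD_m(\varphi_1\circ\mu,\dots,\varphi_n\circ\mu)\le k$.

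There is no substantial obstacle. The only point needing care is the compatibility of contiguity classes with composition on both sides, which the paper already uses implicitly in Propositions \ref{3.3} and \ref{left}. I would state it in one line rather than reprove it.
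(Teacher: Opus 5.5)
Your proposal is correct and matches the paper's proof essentially step for step. You take the covering of $K''$ that witnesses $\SD_m(\mu,\mu')$, post-compose $\mu\circ\eta\sim\mu'\circ\eta$ with each $\varphi_s$, and chain through the hypothesis $\varphi_1\circ\mu'\sim\dots\sim\varphi_n\circ\mu'$. The only differences are that you state explicitly the pre-composition step $\varphi_s\circ\mu'\circ\eta\sim\varphi_t\circ\mu'\circ\eta$ and the trivial case $k=\infty$, both of which the paper uses tacitly.
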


\begin{proof}
    Assume that $\SD_m(\mu,\mu')=k.$ Then there exist subcomplexes $K_0,\dots,K_k$ of $K''$ such that each $K_j$ has the property that for any map $\eta:P\to K_j$ from $m-$ dimensional simplicial complex $P$, $\mu\circ\eta\sim\mu'\circ\eta.$ Then we have 
    $$\varphi_1\circ\mu\circ\eta\sim\varphi_1\circ\mu'\circ\eta$$
	$$\varphi_2\circ\mu\circ\eta\sim\varphi_2\circ\mu'\circ\eta$$
	$$\dots$$
	$$\varphi_n\circ\mu\circ\eta\sim\varphi_n\circ\mu'\circ\eta$$
	  Since $\varphi_1\circ\mu'\sim\dots\sim\varphi_n\circ\mu',$ we find that $\varphi_1\circ\mu\circ\eta\sim\dots\sim\varphi_n\circ\mu\circ\eta.$ Hence, $\SD_m(\varphi_1\circ\mu,\dots,\varphi_n\circ\mu)\leq k.$
\end{proof}

\section{Categorical product}

Since the Cartisian product of two simplical complexes may not be a simplicial complex, we used the categorical product instead of the Cartisian product. Let us denote $K\times L$ for the categorical product of the simplicial complexes $K$ and $L$ which is different from the usual standard notion $K \sqcap L$ used in \cite{K}. Hence, we use $K^{r}$ for categorical product $K\sqcap\cdots\sqcap K$. Note that categorical product is functorial by construction \cite{K}. Here we give the definition of the categorical product of two simplicial complexes, then one can build $K^{r}$ inductively.  

\begin{definition}
    The categorical product of the simplicial complexes $K$ and $L$, denoted by $K\times L$, is defined as follows. The vertices of $K\times L$ are pairs $(v,w)$ of vertices with $v\in K$ and $w\in L$, and the simplices of $K\times L$ are the set of vertices $\{(v_{1},w_{1}),\cdots,(v_{s},w_{s})\}$ such that $(v_{0},\cdots,v_{s})$ is a simplex of K and $(w_{0},\cdots,w_{s})$ is a simplex of L.  
\end{definition}


The categorical product of simplicial maps $f:K\rightarrow L$ and $g:K'\rightarrow L'$ is defined by $f\times g: K\times K'\rightarrow L\times L'$, $(f\times g)(\sigma,\tau):=(f(\sigma),g(\tau))$.

We state the following easy observations as lemma below since we use them in the proof of product formula. 

\begin{lemma}\label{easy lemma}
 For given simplicial maps $\varphi_{i}:K\to L$ and $\psi_{i}:K'\to L'$ with $\varphi_{i}\sim \varphi_{j}$ and $\psi_{i}\sim  \psi_{j}$ for $i,j\in \{1,\cdots,r\}$, then $$\varphi_{i}\times\psi_{i}\sim \varphi_{j}\times \psi_{j}:K\times K'\to L\times L'$$ for all  $i,j\in \{1,\cdots,r\}$.
\end{lemma}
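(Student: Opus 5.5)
The plan is to reduce the statement to a single contiguity step and then chain. Since $\sim$ is by definition the equivalence relation generated by $\sim_c$, it is enough to prove two things. First, contiguity is preserved by taking a product with a fixed map: if $f\sim_c f'$ then $f\times g\sim_c f'\times g$, and symmetrically in the second factor. Second, we chain these steps. Concretely, fix $i,j$ and choose sequences $\varphi_i=f_1\sim_c f_2\sim_c\cdots\sim_c f_p=\varphi_j$ and $\psi_i=g_1\sim_c\cdots\sim_c g_q=\psi_j$. We then pass first along the first factor and then along the second:
\[
\varphi_i\times\psi_i=f_1\times g_1\sim_c f_2\times g_1\sim_c\cdots\sim_c f_p\times g_1\sim_c f_p\times g_2\sim_c\cdots\sim_c f_p\times g_q=\varphi_j\times\psi_j .
\]
This shows $\varphi_i\times\psi_i\sim\varphi_j\times\psi_j$.

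The key step is the one-factor contiguity claim. Let $\{(v_0,w_0),\dots,(v_s,w_s)\}$ be a simplex of $K\times K'$, so that $\{v_0,\dots,v_s\}$ is a simplex of $K$ and $\{w_0,\dots,w_s\}$ is a simplex of $K'$. We need the set
\[
\{(f(v_k),g(w_k))\}_{k}\cup\{(f'(v_k),g(w_k))\}_{k}
\]
to be a simplex of $L\times L'$. Its first coordinates all lie in $\{f(v_k)\}\cup\{f'(v_k)\}$, which is a simplex of $L$ because $f\sim_c f'$. Its second coordinates all lie in $\{g(w_k)\}$, which is a simplex of $L'$ because $g$ is simplicial.

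The main point of care is how the definition of the categorical product should be read. Simplices are the vertex sets whose projections to each factor are simplices; equivalently, they are subsets of $\sigma\times\tau$ with $\sigma\in K$ and $\tau\in L$. The indexed listing in the definition is just a parametrization of this. Under this reading the set above is a subset of $\sigma\times\tau$ with $\sigma$ a simplex of $L$ and $\tau$ a simplex of $L'$, so it is a simplex. The same reading also confirms that $f\times g$ is simplicial, which is needed for the product maps to make sense in the first place. With this settled, the remaining steps are routine bookkeeping.
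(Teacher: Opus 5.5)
Your proof is correct. The paper gives no proof of this lemma (it calls it an ``easy observation'' and uses it directly in the proof of Theorem~\ref{categoricalproduct}), so your argument fills that gap, and it is the natural verification.

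The core step is checking that one contiguity step in a single factor survives taking the product with a fixed simplicial map; chaining such steps then gives the lemma. Your reading of the categorical product is the right one: a vertex set is a simplex if and only if both coordinate projections are simplices, i.e.\ it is a subset of some $\sigma\times\tau$. This reading is exactly what makes $f\times g$ simplicial and the one-step contiguity hold. Making it explicit is worthwhile, since the paper's indexed definition is slightly garbled.

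A minor streamlining is available. The same computation shows directly that $f\sim_c f'$ and $g\sim_c g'$ imply $f\times g\sim_c f'\times g'$: first coordinates lie in $f(\sigma)\cup f'(\sigma)$ and second coordinates in $g(\tau)\cup g'(\tau)$. Since contiguity is reflexive, you can pad the shorter chain with repeated maps and advance in both factors at once. This changes nothing essential.
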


\begin{theorem}\label{categoricalproduct}
    For given simplicial maps $\varphi_{i}:K\to L$ and $\psi_{i}:K'\to L'$ with $\varphi_{i}\sim\varphi_{j}$ and $\psi_{i}\sim\psi_{j}$ for all $i,j\in \{1,\cdots,r\}$. Then 
    
    $$ SD_m(\varphi_{1}\times \psi_{1},\cdots, \varphi_{r}\times \psi_{r})+1 \leq (SD_m(\varphi_{1},\cdots,\varphi_{r})+1)(SD_m(\psi_{1},\cdots,\psi_{r})+1).$$
\end{theorem}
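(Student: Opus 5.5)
Under the stated hypotheses the inequality should follow almost immediately from Lemma~\ref{easy lemma} and Proposition~\ref{Prop1}(2), so I will argue through those two results and avoid any product of coverings. The hypotheses give $\varphi_i\sim\varphi_j$ and $\psi_i\sim\psi_j$ for all $i,j\in\{1,\dots,r\}$. Lemma~\ref{easy lemma} then yields
\[
\varphi_i\times\psi_i\sim\varphi_j\times\psi_j : K\times K'\to L\times L'
\]
for all $i,j$. In particular $\varphi_i\times\psi_i\sim\varphi_{i+1}\times\psi_{i+1}$ for every $i\in\{1,\dots,r-1\}$.

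Next I apply Proposition~\ref{Prop1}(2) to the maps $\varphi_1\times\psi_1,\dots,\varphi_r\times\psi_r$, which gives
\[
\SD_m(\varphi_1\times\psi_1,\dots,\varphi_r\times\psi_r)=0 .
\]
The left-hand side of the claimed inequality is therefore equal to $1$. Since $\SD_m$ of any family is a nonnegative integer, both factors on the right-hand side are at least $1$, so their product is at least $1$. This proves the inequality.

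The only place real content enters is Lemma~\ref{easy lemma}, which the paper states without proof and which I may assume. If one wanted to justify it, the step is the following. Whenever $f\sim_c f'$ and $g\sim_c g'$, the images of a simplex $\{(v_0,w_0),\dots,(v_s,w_s)\}$ under $f\times g$ and $f'\times g'$ together form a simplex of $L\times L'$. The reason is that their first coordinates lie in the simplex $f(\sigma)\cup f'(\sigma)$ and their second coordinates lie in $g(\tau)\cup g'(\tau)$. One then changes one factor at a time along contiguity chains: first pass from $\varphi_i\times\psi_i$ to $\varphi_j\times\psi_i$, then from $\varphi_j\times\psi_i$ to $\varphi_j\times\psi_j$.

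I expect no serious obstacle. The hypotheses make every factor family already contiguity-equivalent, so the covering-product argument one might expect is unnecessary. That argument would take the products of the covering subcomplexes $K_a\times K'_b$ and handle maps $\eta:P\to K_a\times K'_b$ via the projections $\pr_1\circ\eta$ and $\pr_2\circ\eta$. It would only be needed for a version of the statement without the contiguity assumptions.
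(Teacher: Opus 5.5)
Your proof is correct, but it takes a different route from the paper.

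You use the contiguity hypotheses directly. By Lemma~\ref{easy lemma}, whose coordinatewise justification you sketch correctly, the product maps all lie in one contiguity class. Proposition~\ref{Prop1}(2) then gives $\SD_m(\varphi_1\times\psi_1,\dots,\varphi_r\times\psi_r)=0$. The same proposition makes both factors on the right equal to $1$, so the statement as written reduces to $1\le 1$.

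The paper instead carries out the covering-product argument you describe in your last paragraph. It starts from covers $K_0,\dots,K_k$ of $K$ and $L_0,\dots,L_\ell$ of $K'$ realizing the two distances. It forms the $(k+1)(\ell+1)$ subcomplexes $K_s\times L_{\bar s}$; these cover $K\times K'$ because a simplex of the categorical product projects to a simplex in each factor. For $h:P\to K_s\times L_{\bar s}$ it applies the covering property to the coordinate maps $h_{K_s}$ and $h_{L_{\bar s}}$. In that argument the global hypotheses $\varphi_i\sim\varphi_j$ and $\psi_i\sim\psi_j$ play no essential role.

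So the paper's proof is the one that carries over to the informative inequality without those hypotheses. Yours is shorter and airtight for the statement as given, and it makes explicit that the theorem is trivial under its own hypotheses.

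If you want the general version, read the paper's identity $(\varphi_i\times\psi_i)\circ h=(\varphi_i\circ h_{K_s})\times(\psi_i\circ h_{L_{\bar s}})$ as the pairing $(\varphi_i\circ h_{K_s},\psi_i\circ h_{L_{\bar s}})\colon P\to L\times L'$. The step then needed is the pairing analogue of Lemma~\ref{easy lemma}: if $f\sim f'\colon P\to L$ and $g\sim g'\colon P\to L'$, then $(f,g)\sim(f',g')$. This follows from exactly the coordinatewise argument you gave.
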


\begin{proof}
Suppose $SD_{m}(\varphi_{1},\cdots,\varphi_{r})=k$ and $SD_{m}(\psi_{1},\cdots,\psi_{r})=l$. Then there exists simplicial complexes $K_{0},\cdots,K_{k}$ for which $\eta:P\to K_{s}$ from $m$-dimensional simplicial complex $P$, $\varphi_{i}\circ \eta$ and $\varphi_{j}\circ \eta$ are in the same contiguity class for all $i,j\in \{1,\cdots,r\}$, $s\in\{1,\cdots,k\}$. Similarly, there exists simplicial complexes $L_{0}\cdots,L_{l}$ for which $\overline{\eta}:P\to L_{\overline{s}}$ from $m$-dimensional simplicial complex $P$, $\psi_{i}\circ \overline{\eta}$ and $\psi_{j}\circ \overline{\eta}$ are in the same contiguity class for all $i,j\in \{1,\cdots,r\}$, $\overline{s}\in\{1,\cdots,\ell\}$.

Consider the following collections $\{K_{s}\times L_{\overline{s}}\}$ for $0\leq s\leq k$, $0\leq \overline{s}\leq l$. By construction, $K\times K'$ are covered by above collection. 

Now let $P$ be an $m$-dimensional simplicial complex with simplicial map $h:P\to K_{s}\times L_{\overline{s}}$. By the definition of categorical product, there exist projections, which are simplicial maps $h_{K_{s}}:P\to K_{s}$ and $h_{L_{\overline{s}}}:P\to L_{\overline{s}}$. By $\SD_{m}$-property for $K_{s}$ and $L_{\overline{s}}$, we have the following:

$$\varphi_{i}\circ h_{K_{s}}\sim \varphi_{j}\circ h_{K_{s}}$$ and $$\psi_{i}\circ h_{L_{\overline{s}}}\sim \psi_{j}\circ h_{L_{\overline{s}}}$$ for all $i,j\in \{1,\cdots,r\}$, $s\in\{1,\cdots,k\}$ and  $\overline{s}\in\{1,\cdots,\ell\}$.  On the other hand, the categorical product satisfies the composition rule, then we have the following:

$$(\varphi_{i}\times \psi_{i})\circ h=(\varphi_{i}\circ h_{K_{s}})\times (\psi_{i}\circ h_{L_{\overline{s}}})$$ for all $i\in \{1,\cdots,r\}$. 

By Lemma \ref{easy lemma}, we conclude $(\varphi_{i}\times \psi_{i})\circ h$ and $(\varphi_{j}\times \psi_{j})\circ h$ are in the same contiguity class for all $i,j\in \{1,\cdots,r\}$. Therefore, the collections  
$\{K_{s}\times L_{\overline{s}}\}$ for $0\leq s\leq k$, $0\leq \overline{s}\leq \ell$ have $\SD_{m}$-property. This completes the proof of the theorem.  
\end{proof}

\section{Barycentric subdivision}
    We recall the barycentric subdivision of simplicial complex K, denoted by $sd(K)$, and prove the monotonicity property for sequential $m$-contiguity distance in this section. 
    
\begin{definition}
The barycentric subdivison of a given simplicial complex $K$ is the simplicial complex $\sd(K)$ whose set of vertices is $K$ and each n-simplex in $\sd(K)$ is of the form $\{\sigma_{0},\sigma_{1},\cdots,\sigma_{n}\}$ where $\sigma_{0}\subsetneq \sigma_{1}\subsetneq,\cdots,\subsetneq\sigma_{n}$. 
\end{definition}

 \begin{definition}
     For a simplicial map $\varphi:K\to L$, the induced map $\sd(\varphi):\sd(K)\to\sd(L)$ is given by $\sd(\varphi)(\{\sigma_{0},\sigma_{1},\cdots,\sigma_{n}\})=\{\varphi(\sigma_{0}),\varphi(\sigma_{1}),\cdots,\varphi(\sigma_{n})\}$
 \end{definition}
Notice that $\sd(\phi)$ is a simplicial map, $\sd(\id)=\id$ and $\sd(\varphi\circ\psi)=\sd(\varphi)\circ\sd(\psi)$. In another word, barycentric subdivision is functorial. 

\begin{proposition}\cite[Proposition 3.1.3]{FTMVMV2}\label{bary}
If the simplicial maps $\phi,\psi:K\to L$ are in the same contiguity class, so are $\sd(\phi)$ and $\sd(\psi).$
\end{proposition}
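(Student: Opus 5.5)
It suffices to prove the statement for a single contiguity step, since contiguity classes are generated by chains. Suppose $\phi\sim\psi$ through $\phi=\phi_1\sim_c\phi_2\sim_c\dots\sim_c\phi_k=\psi$. By functoriality of $\sd$, the induced maps $\sd(\phi_1),\dots,\sd(\phi_k)$ are defined. If each consecutive pair $\sd(\phi_i),\sd(\phi_{i+1})$ is in the same contiguity class, then concatenating the chains shows $\sd(\phi)\sim\sd(\psi)$. We therefore reduce to the case $\phi\sim_c\psi$.

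For contiguous $\phi,\psi$, I would not try to show $\sd(\phi)\sim_c\sd(\psi)$ directly. In general this fails, because a chain $\phi(\sigma_0)\subsetneq\cdots$ mixed with $\psi(\sigma_j)$ need not be a chain. Instead I would insert an intermediate map $\chi:\sd(K)\to\sd(L)$ defined on vertices by $\chi(\sigma)=\phi(\sigma)\cup\psi(\sigma)$. This is a simplex of $L$ exactly because $\phi\sim_c\psi$.

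First, $\chi$ is simplicial. If $\sigma_0\subsetneq\cdots\subsetneq\sigma_n$, then the sets $\chi(\sigma_0)\subseteq\cdots\subseteq\chi(\sigma_n)$ are nested, so after deleting repetitions they form a chain, hence a simplex of $\sd(L)$.

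Second, $\sd(\phi)\sim_c\chi$. For a chain $\{\sigma_0,\dots,\sigma_n\}$, the set $\{\phi(\sigma_0),\dots,\phi(\sigma_n),\chi(\sigma_0),\dots,\chi(\sigma_n)\}$ should be totally ordered by inclusion. Both families are monotone, and $\phi(\sigma_i)\subseteq\chi(\sigma_i)$. The remaining comparison is between $\phi(\sigma_j)$ and $\chi(\sigma_i)$. For $i\le j$ we need $\chi(\sigma_i)\subseteq\phi(\sigma_j)$, and this is where the argument may break: it requires $\psi(\sigma_i)\subseteq\phi(\sigma_j)$, which is not automatic.

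This comparison is the main obstacle, and the fix I would try first is to choose the intermediate map more cleverly. One option is to use $\chi_i(\sigma)=\phi(\sigma)\cup\psi(\sigma)$ in a staged fashion. Another option, following the standard argument in \cite{FTMVMV2}, is to order the vertices of $K$ and pass through maps that switch from $\phi$ to $\psi$ one vertex at a time. So I would first reduce to $\phi$ and $\psi$ differing at a single vertex $v$, with $\{\phi(v),\psi(v)\}\cup\phi(\mathrm{st}(v))$ spanning simplices appropriately. Any contiguity $\phi\sim_c\psi$ factors as a chain of such elementary contiguities, since the maps $\phi_A$ (agree with $\psi$ on the vertex set $A$, with $\phi$ elsewhere) are simplicial by contiguity and consecutive ones are contiguous.

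For an elementary contiguity the verification is cleaner. Every $\sigma$ containing $v$ satisfies $\psi(\sigma)\subseteq\phi(\sigma)\cup\{\psi(v)\}=\chi(\sigma)$, and otherwise $\phi(\sigma)=\psi(\sigma)$. One then checks that $\{\phi(\sigma_i)\}\cup\{\chi(\sigma_i)\}$ is a chain by splitting the flag at the first index containing $v$: below it, everything equals $\phi(\sigma_i)$; above it, everything is $\phi(\sigma_i)$ or $\phi(\sigma_i)\cup\{\psi(v)\}$, and these are nested. The symmetric argument gives $\chi\sim_c\sd(\psi)$, which completes the chain $\sd(\phi)\sim_c\chi\sim_c\sd(\psi)$.
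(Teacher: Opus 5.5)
\textbf{There is a genuine gap: the final nesting check fails, even in the one-vertex case.}

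Several of your steps are correct:
\begin{itemize}
\item the reduction to a single contiguity;
\item the further reduction to an elementary contiguity, where $\phi,\psi$ differ only at one vertex $v$ of $K$ (the maps $\phi_A$ are simplicial and consecutive ones are contiguous);
\item the simpliciality of $\chi(\sigma)=\phi(\sigma)\cup\psi(\sigma)$.
\end{itemize}

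The problem is the last verification. Let $p$ be the first index with $v\in\sigma_p$. For $j<i$ above $p$, you must compare $\chi(\sigma_j)=\phi(\sigma_j)\cup\{\psi(v)\}$ with $\phi(\sigma_i)$. These are incomparable as soon as $\psi(v)\notin\phi(\sigma_i)$ and $\phi(\sigma_j)\subsetneq\phi(\sigma_i)$.

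Here is a concrete counterexample. Take $K$ the edge $\{v,w\}$, $L$ the full $2$-simplex on $\{x,y,z\}$, and
$\phi(v)=x$, $\psi(v)=z$, $\phi(w)=\psi(w)=y$.
These maps are contiguous and differ only at $v$. On the flag $\{v\}\subsetneq\{v,w\}$ you get $\chi(\{v\})=\{x,z\}$, which is incomparable both with $\phi(\{v,w\})=\{x,y\}$ and with $\psi(\{v,w\})=\{y,z\}$. So neither $\sd(\phi),\chi$ nor $\chi,\sd(\psi)$ are contiguous.

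The one-vertex reduction cannot cure this. Changing $\phi$ at a single vertex $v$ of $K$ changes $\sd(\phi)$ at every simplex containing $v$, that is, at many mutually comparable vertices of $\sd K$ at once. This is exactly the obstacle you had already spotted; the reduction does not remove it.

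\textbf{The missing idea: interpolate one vertex of $\sd K$ at a time.} Work one simplex of $K$ at a time, using the pointwise inclusion $\phi(\sigma)\subseteq\chi(\sigma)$ for every simplex $\sigma$.
\begin{enumerate}
\item List the simplices of $K$ as $\tau_1,\dots,\tau_N$ with non-increasing dimension. Let $h_k$ equal $\chi$ on $\tau_1,\dots,\tau_k$ and $\sd(\phi)$ elsewhere.
\item The switched set is closed under passing to larger simplices. So on a flag the values read $\phi(\sigma_0)\subseteq\cdots\subseteq\phi(\sigma_{q-1})\subseteq\chi(\sigma_q)\subseteq\cdots\subseteq\chi(\sigma_n)$, and hence $h_k$ is simplicial.
\item The maps $h_{k-1}$ and $h_k$ differ only at $\tau=\tau_k$. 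For a flag through $\tau$:
\begin{itemize}
\item if $\sigma\subsetneq\tau$, then $h_k(\sigma)=\phi(\sigma)\subseteq\phi(\tau)$;
\item if $\sigma\supsetneq\tau$, then $h_k(\sigma)=\chi(\sigma)\supseteq\chi(\tau)\supseteq\phi(\tau)$.
\end{itemize}
So adding $\phi(\tau)$ to the chain $h_k(C)$ keeps it nested, which gives $h_{k-1}\sim_c h_k$.
\item Therefore $\sd(\phi)=h_0\sim_c h_1\sim_c\cdots\sim_c h_N=\chi$. The symmetric argument, using $\psi(\sigma)\subseteq\chi(\sigma)$, gives $\chi\sim\sd(\psi)$.
\end{enumerate}

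This is essentially the standard finite-space argument: pointwise comparable order-preserving maps of face posets induce maps in the same contiguity class on order complexes. Note that it yields only $\sd(\phi)\sim\sd(\psi)$, not a two-step contiguity through $\chi$.
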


For the completeness of our paper, we give basic properties and propositions with proofs. We will use them in the proof of our main theorem in this section.

\begin{proposition}\label{barycentric}
If the simplicial maps $\varphi_{i}:K\to L$ are in the same contiguity class for all $\varphi_{i}\sim\varphi_{j}$,  $i,j\in\{1,\cdots,n\}$, so are $\sd(\varphi_{i})$ and $\sd(\varphi_{j}).$
\end{proposition}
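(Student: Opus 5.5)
This statement is essentially the pairwise result Proposition~\ref{bary} applied to each pair of indices. The plan is to fix arbitrary $i,j\in\{1,\dots,n\}$ and use the hypothesis $\varphi_i\sim\varphi_j$. By definition of the contiguity class, this gives a finite chain
\[
\varphi_i=\phi_1\sim_c\phi_2\sim_c\cdots\sim_c\phi_p=\varphi_j
\]
of simplicial maps $K\to L$. Applying Proposition~\ref{bary} to the pair $(\varphi_i,\varphi_j)$ then yields $\sd(\varphi_i)\sim\sd(\varphi_j)$ directly. Since $i,j$ were arbitrary, all the maps $\sd(\varphi_1),\dots,\sd(\varphi_n)$ lie in one contiguity class.

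For completeness, I would also sketch the content of Proposition~\ref{bary} itself, since that is the only nontrivial step. Because contiguity classes are generated by the relation $\sim_c$, it suffices to show that $\phi\sim_c\psi$ implies $\sd(\phi)\sim\sd(\psi)$, and then concatenate the resulting chains. In general $\sd(\phi)$ and $\sd(\psi)$ need not be directly contiguous, so I would interpolate with the auxiliary map
\[
\theta:\sd(K)\to\sd(L),\qquad \theta(\sigma)=\phi(\sigma)\cup\psi(\sigma).
\]
This is a vertex of $\sd(L)$ exactly because $\phi\sim_c\psi$.

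The key steps are then as follows.
\begin{enumerate}
\item[(a)] $\theta$ is simplicial, because a chain $\sigma_0\subsetneq\cdots\subsetneq\sigma_k$ is sent to a weakly increasing chain of simplices.
\item[(b)] $\sd(\phi)\sim_c\theta$. For a chain $\sigma_0\subsetneq\cdots\subsetneq\sigma_k$, the set $\{\phi(\sigma_t)\}\cup\{\theta(\sigma_t)\}$ is totally ordered by inclusion once one notes $\phi(\sigma_s)\subseteq\phi(\sigma_t)\subseteq\theta(\sigma_t)$ for $s\le t$. The remaining comparison, between $\theta(\sigma_s)$ and $\phi(\sigma_t)$ for $s<t$, needs care.
\item[(c)] $\theta\sim_c\sd(\psi)$, by the symmetric argument.
\end{enumerate}
The main obstacle is step (b): verifying that the union of the images is a chain. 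If it fails, I would instead insert the intermediate maps $\sigma\mapsto\phi(\sigma)\cup\psi(\tau_\sigma)$ built from the minimal vertex (or a fixed vertex ordering), following the standard argument of \cite{FTMVMV2}. Since that proposition is already available as Proposition~\ref{bary}, the proof as written in the paper need only invoke it pairwise as above.
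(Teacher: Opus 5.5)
Your argument is correct and uses the same single ingredient as the paper, Proposition~\ref{bary}. The paper wraps it in an induction on $n$: the subfamily $\varphi_1,\dots,\varphi_{n-1}$, then some $\varphi_r\sim\varphi_n$, then transitivity. You instead note that the hypothesis already gives $\varphi_i\sim\varphi_j$ for every pair, so applying Proposition~\ref{bary} pairwise finishes the proof at once; that induction is unnecessary.

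One caveat concerns your optional sketch of Proposition~\ref{bary}. The worry you flag in step (b) is real: $\sd(\phi)\sim_c\theta$ is false in general. Take $K$ the $1$-simplex on $v,w$ and $L$ the full $2$-simplex on $a,b,c$, with $\phi(v)=a$, $\psi(v)=b$ and $\phi(w)=\psi(w)=c$. Then $\phi\sim_c\psi$. But on the edge $\{v\}\subsetneq\{v,w\}$ of $\sd(K)$, the union of images contains $\theta(\{v\})=\{a,b\}$ and $\phi(\{v,w\})=\{a,c\}$, which are incomparable.

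A repair that keeps your $\theta$ is to interpolate by dimension. Let $\theta_k$ agree with $\theta$ on simplices of dimension $\geq k$ and with $\sd(\phi)$ on those of dimension $<k$. Each $\theta_k$ is simplicial, $\theta_0=\theta$, and $\theta_{\dim K+1}=\sd(\phi)$. Moreover $\theta_{k+1}\sim_c\theta_k$: a chain $\sigma_0\subsetneq\cdots\subsetneq\sigma_p$ contains at most one $k$-simplex $\sigma_q$. If it contains none, the two maps agree on it. Otherwise the union of images is the chain $\phi(\sigma_0)\subseteq\cdots\subseteq\phi(\sigma_q)\subseteq\theta(\sigma_q)\subseteq\cdots\subseteq\theta(\sigma_p)$. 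Hence $\sd(\phi)\sim\theta$, and by symmetry $\theta\sim\sd(\psi)$.

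Since both you and the paper simply cite Proposition~\ref{bary}, this does not affect the proof of the statement itself.
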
 

\begin{proof}

We argue by induction on $n$. The case $n=2$ is precisely Proposition \ref{bary}. Assume the statement holds for every family of $n-1$ simplicial maps belonging to the same contiguity class, and let $\varphi_1,\ldots,\varphi_n:K\to L$ belong to the same contiguity class. Then the subfamily $\varphi_1,\ldots,\varphi_{n-1}$ also belongs to the same contiguity class. By the induction hypothesis, $\sd(\varphi_1),\ldots,\sd(\varphi_{n-1})$ belong to the same contiguity class.
Since $\varphi_n$ belongs to the same contiguity class as the other maps, there exists some
$r\in\{1,\ldots,n-1\}$ such that $\varphi_r$ and $\varphi_n$ are in the same contiguity class. Applying Proposition \ref{bary} yields $\sd(\varphi_r)\sim\sd(\varphi_n).$
Because $\sd(\varphi_r)$ already belongs to the contiguity class of
$\sd(\varphi_1),\ldots,\sd(\varphi_{n-1}),$ it follows by transitivity that $\sd(\varphi_1)\sim\cdots\sim\sd(\varphi_n).$ Therefore, $\sd(\varphi_1),\ldots,\sd(\varphi_n)$ belong to the same contiguity class. This completes the proof. 
\end{proof}

The following corollary follows directly from Proposition~\ref{barycentric}.

\begin{corollary}\label{m-barycentric}
Let $\varphi_{i}:K\to L$ be the simplicial maps for $i\in\{1,\cdots,n\}$. If given the simplicial map $\eta:P\to K$ with $\varphi_{i}\circ\eta$ and $\varphi_{j}\circ\eta$ are in the same contiguity class for all $i,j\in\{1,\cdots,n\}$, then $\sd(\varphi_{i})\circ\sd(\eta)$ and $\sd(\varphi_{j})\circ\sd(\eta)$ are in the same contiguity class for $i,j\in\{1,\cdots,n\}$.
\end{corollary}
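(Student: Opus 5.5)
The plan is to apply functoriality of barycentric subdivision to a chain of contiguous maps witnessing that the maps $\varphi_i\circ\eta$ lie in one contiguity class, and then invoke Proposition~\ref{barycentric}. By hypothesis the maps $\varphi_1\circ\eta,\dots,\varphi_n\circ\eta:P\to L$ all lie in the same contiguity class. So Proposition~\ref{barycentric}, applied to this family of simplicial maps $P\to L$, gives that $\sd(\varphi_1\circ\eta),\dots,\sd(\varphi_n\circ\eta):\sd(P)\to\sd(L)$ all lie in the same contiguity class.

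Next I will use functoriality of $\sd$, noted just after the definition of the induced map: $\sd(\varphi\circ\psi)=\sd(\varphi)\circ\sd(\psi)$. This rewrites each $\sd(\varphi_i\circ\eta)$ as $\sd(\varphi_i)\circ\sd(\eta)$. Hence $\sd(\varphi_i)\circ\sd(\eta)\sim\sd(\varphi_j)\circ\sd(\eta)$ for all $i,j\in\{1,\dots,n\}$, which is the claim.

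The only point needing care is that functoriality is an honest equality of simplicial maps and not merely an identity up to contiguity. On a vertex $\sigma$ of $\sd(P)$, that is, a simplex of $P$, both sides send $\sigma$ to the simplex $\varphi_i(\eta(\sigma))$ of $L$. So the two maps agree on vertices and therefore coincide.

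Alternatively, if one prefers not to rely on the $n$-fold statement, the same argument works with Proposition~\ref{bary} alone. Apply it to each pair $(\varphi_i\circ\eta,\varphi_j\circ\eta)$ and then use functoriality as above. I expect no real obstacle: the corollary is a direct combination of Proposition~\ref{barycentric} with functoriality of $\sd$.
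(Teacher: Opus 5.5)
Your proposal is correct and is the argument the paper intends: the paper says the corollary follows directly from Proposition~\ref{barycentric}, which you apply to the composites $\varphi_i\circ\eta:P\to L$ before rewriting via $\sd(\varphi_i\circ\eta)=\sd(\varphi_i)\circ\sd(\eta)$. Your vertex-level check that this functoriality is an honest equality of simplicial maps is the one detail the paper leaves implicit.
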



The following lemma is needed in the proof of Theorem~\ref{SD-barycentric}.

\begin{lemma}\cite{EYKB}\label{simp-approx}
Let $f:K\to L$ be a simplicial map. Then $f\circ c_K:\sd(K)\longrightarrow L$ is contiguous to $c_L\circ\sd(f):\sd(K)\longrightarrow L,$ where
$c_K:\sd(K)\to K,$ $c_L:\sd(L)\to L$ are the canonical subdivision maps.
\end{lemma}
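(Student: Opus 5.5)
The plan is to check contiguity directly on each simplex of $\sd(K)$. The key fact is that both $f\circ c_K$ and $c_L\circ\sd(f)$ send every vertex of a simplex of $\sd(K)$ into a single simplex of $L$. I use the standard description of the canonical subdivision map: $c_K:\sd(K)\to K$ assigns to each vertex $\sigma$ of $\sd(K)$ (a simplex of $K$) a chosen vertex $c_K(\sigma)\in\sigma$. A typical choice is the largest vertex of $\sigma$ with respect to a fixed ordering of the vertices of $K$, but only $c_K(\sigma)\in\sigma$ will be used. The map $c_L$ is defined in the same way.

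First I record why $c_K$ is simplicial, since the same argument drives the contiguity. Let $\{\sigma_0,\dots,\sigma_n\}$ be a simplex of $\sd(K)$ with $\sigma_0\subsetneq\cdots\subsetneq\sigma_n$. Then $c_K(\sigma_i)\in\sigma_i\subseteq\sigma_n$ for every $i$. Hence the image is a subset of the simplex $\sigma_n$, and so it is a face of $\sigma_n$ and a simplex of $K$.

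Now fix such a chain $\sigma_0\subsetneq\cdots\subsetneq\sigma_n$. On one side,
\[
f\circ c_K(\sigma_i)=f(c_K(\sigma_i))\in f(\sigma_i)\subseteq f(\sigma_n).
\]
On the other side, $\sd(f)(\sigma_i)=f(\sigma_i)$ is a simplex of $L$, so
\[
c_L\circ\sd(f)(\sigma_i)=c_L(f(\sigma_i))\in f(\sigma_i)\subseteq f(\sigma_n).
\]
Therefore the set
\[
\{f c_K(\sigma_0),\dots,f c_K(\sigma_n),\,c_L\sd(f)(\sigma_0),\dots,c_L\sd(f)(\sigma_n)\}
\]
is contained in $f(\sigma_n)$. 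Since $f$ is simplicial, $f(\sigma_n)$ is a simplex of $L$, and the set above is a face of it, hence a simplex of $L$. This is exactly the definition of $f\circ c_K\sim_c c_L\circ\sd(f)$.

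The only delicate point is the convention for the canonical maps $c_K,c_L$. The argument needs nothing beyond $c(\sigma)\in\sigma$ together with monotonicity, namely $\sigma_i\subseteq\sigma_n$ implies $f(\sigma_i)\subseteq f(\sigma_n)$. It therefore holds for any admissible choice, and in particular there is no need for the choices in $K$ and $L$ to be compatible with each other or with $f$.
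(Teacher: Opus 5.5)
Your proof is correct. Checking that both $f\circ c_K(\sigma_i)$ and $c_L\circ\sd(f)(\sigma_i)$ lie in the simplex $f(\sigma_n)$ is the standard argument for this fact, and it holds for any choice $c_K(\sigma)\in\sigma$, $c_L(\tau)\in\tau$. The paper gives no proof of its own here and only quotes the lemma from \cite{EYKB}, so there is no in-paper argument to compare against.
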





The following theorem establishes the relationship between the contiguity distance of a family of simplicial maps and the contiguity distance of the maps induced on their barycentric subdivisions.

\begin{theorem}\label{SD-barycentric}
    For simplicial maps $\varphi_{i}:K\to L$ for $i\in\{1,\ldots,n\}$, we have $$\SD_{m}(\sd(\varphi_{1}),\ldots,\sd(\varphi_{n}))\leq \SD_{m}(\varphi_{1},\ldots,\varphi_{n})$$
\end{theorem}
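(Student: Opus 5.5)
The plan is to transport a good cover of $K$ for the family $\varphi_1,\ldots,\varphi_n$ to a cover of $\sd(K)$ by subdivision. Suppose $\SD_m(\varphi_1,\ldots,\varphi_n)=k$, realised by subcomplexes $K_0,\ldots,K_k$ of $K$. Since every simplex of $\sd(K)$ is a chain $\sigma_0\subsetneq\cdots\subsetneq\sigma_r$ whose top simplex $\sigma_r$ lies in some $K_j$, the subcomplexes $\sd(K_0),\ldots,\sd(K_k)$ cover $\sd(K)$. Moreover $\sd(\varphi_i)|_{\sd(K_j)}=\sd(\varphi_i|_{K_j})$, so it suffices to show the following. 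For each $j$ and every simplicial map $\eta:P\to\sd(K_j)$ with $\dim P=m$, the maps $\sd(\varphi_1)\circ\eta,\ldots,\sd(\varphi_n)\circ\eta$ lie in one contiguity class of maps into $\sd(L)$.

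To produce such contiguities I will go through $K_j$, where the hypothesis applies. The canonical map $c_K:\sd(K)\to K$, which sends a simplex to one of its vertices, restricts to $c:\sd(K_j)\to K_j$. Hence $\theta:=c\circ\eta:P\to K_j$ is a simplicial map from an $m$-dimensional complex, and the hypothesis gives $\varphi_1\circ\theta\sim\cdots\sim\varphi_n\circ\theta$. Corollary~\ref{m-barycentric}, applied to $\theta$, then gives $\sd(\varphi_1)\circ\sd(\theta)\sim\cdots\sim\sd(\varphi_n)\circ\sd(\theta)$ as maps $\sd(P)\to\sd(L)$. Separately, Lemma~\ref{simp-approx} gives $\varphi_i\circ c_K\sim c_L\circ\sd(\varphi_i)$. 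Thus $c_L\circ\sd(\varphi_i)\circ\eta\sim\varphi_i\circ\theta$, and all the maps $c_L\circ\sd(\varphi_i)\circ\eta$ lie in one contiguity class in $L$.

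The main obstacle is returning from these statements to a contiguity between the maps $\sd(\varphi_i)\circ\eta$ themselves in $\sd(L)$. The first chain lives on $\sd(P)$ instead of $P$, and the second lives in $L$ instead of $\sd(L)$; there is no simplicial map $L\to\sd(L)$ to undo $c_L$. My first attempt is to compare $\eta$ with $\sd(\theta)$ on $\sd(P)$, which is still $m$-dimensional. Lemma~\ref{simp-approx} applied to $\eta$ gives $\eta\circ c_P\sim c_{\sd(K)}\circ\sd(\eta)$. I would try to show that $c_{\sd(K)}\circ\sd(\eta)$ lies in the same contiguity class as $\sd(c_K)\circ\sd(\eta)=\sd(\theta)$, using that both send a chain of simplices of $\sd(K_j)$ to a vertex or face compatible with its top simplex. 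This would give $\sd(\varphi_i)\circ\eta\circ c_P\sim\sd(\varphi_i)\circ\sd(\theta)$ for each $i$, hence the contiguity of the family after precomposition with $c_P$.

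It then remains to remove $c_P$. One option is to choose the cover argument so that the test maps into $\sd(K_j)$ can be taken of the form $\sd(\theta)$; here I would exploit that $\sd(P)$ is again $m$-dimensional, so the hypothesis on $K_j$ can be fed subdivided complexes. Another option is to pass to the definition with $P$ replaced by $\sd(P)$ and argue that this does not change the distance, using Theorem~\ref{dimen}-type surjectivity reasoning. If this works, each $\sd(K_j)$ has the $\SD_m$-property for the family $\sd(\varphi_i)$, giving the bound $k$. I expect this last lifting step to be where the real work, and possibly an extra hypothesis, lies.
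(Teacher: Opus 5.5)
You follow the paper's route essentially step for step: cover $\sd(K)$ by the $\sd(K_j)$, test on $\theta=c\circ\eta$, apply Corollary~\ref{m-barycentric} and Lemma~\ref{simp-approx}, and reach $\sd(\varphi_1)\circ\eta\circ c_P\sim\cdots\sim\sd(\varphi_n)\circ\eta\circ c_P$ as maps $\sd(P)\to\sd(L)$. Your caution about comparing $c_{\sd(K)}\circ\sd(\eta)$ with $\sd(\theta)$ is warranted. The paper writes $\sd(c)\circ\sd(\eta)=c_{\sd K_\ell}\circ\sd(\eta)$, but on a vertex $\{w\}$ of $\sd(P)$ the two sides give $\{c(\eta(w))\}$ and $\eta(w)$, which differ whenever $\eta(w)$ has positive dimension. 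The genuine gap is the one you name: getting from $\sd(P)$ back to $P$. The paper closes it by asserting that $\sd(\varphi_i)\circ\eta$ and $\sd(\varphi_i)\circ\eta\circ c_P$ are contiguous "by simplicial approximation". These maps have different domains, so the assertion does not parse, and simplicial approximation only gives homotopic realizations, not contiguity. Your first option fails because the definition of $\SD_m$ quantifies over all $\eta:P\to\sd(K_j)$, not only those of the form $\sd(\theta)$. No version of your second option can work either, because the inequality is false for $1\le m<\dim K$.

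Here is a counterexample. Take $K=L=\partial\Delta^3$ on the vertices $0,1,2,3$, with $m=1$, $\varphi_1=\id$ and $\varphi_2=c_0$.

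\emph{Left side.} Any $\eta:P\to K$ with $\dim P=1$ factors through the $1$-skeleton $K^{(1)}$. As maps $K^{(1)}\to K$ one has $\id\sim_c g\sim_c c_0$, where $g(3)=0$ and $g$ fixes $0,1,2$. Indeed, for every simplex $\tau$ of $K^{(1)}$ the sets $\tau\cup g(\tau)\subseteq\tau\cup\{0\}$ and $g(\tau)\cup\{0\}\subseteq\{0,1,2\}$ have at most three vertices, so they are simplices of $K$. Hence $\SD_1(\id,c_0)=0$, realised by the one-piece cover.

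\emph{Right side.} Here $\sd(\id)=\id_{\sd K}$ and $\sd(c_0)$ is the constant map $c$ at $\{0\}$. Test the one-piece cover of $\sd K$ on $P=(\sd K)^{(1)}$, with $\eta$ the inclusion. Since $\sd K$ is a flag complex whose $1$-skeleton is $P$, both simpliciality and contiguity of vertex maps $\sd K\to\sd K$ can be checked on edges of $P$. So any chain $\eta\sim_c f_1\sim_c\cdots\sim_c c$ of maps $P\to\sd K$ is already a chain of contiguous simplicial self-maps of $\sd K$ from $\id_{\sd K}$ to $c$. That would make $|\sd K|\cong S^2$ contractible, so $\SD_1(\sd(\varphi_1),\sd(\varphi_2))\ge 1>0=\SD_1(\varphi_1,\varphi_2)$.

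The underlying reason is that the $\SD_m$-property of a subcomplex $A$ only sees $A^{(m)}$. By contrast, $(\sd A)^{(m)}$ contains the barycentres of the higher simplices of $A$ and is much larger than $\sd(A^{(m)})$. The extra hypothesis you anticipated is therefore really needed. For $m\ge\dim K$, the $\SD_m$-property of $K_j$ amounts to $\varphi_1|_{K_j}\sim\cdots\sim\varphi_n|_{K_j}$ (cf.\ Theorem~\ref{dimen}). In that case your cover $\sd(K_j)$ works at once via $\sd(\varphi_i)|_{\sd(K_j)}=\sd(\varphi_i|_{K_j})$ and Proposition~\ref{barycentric}, with no need for $c_P$.
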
 

\begin{proof}
Let $\SD_m(\varphi_1,\ldots,\varphi_n)=k .$ Then there exist subcomplexes $K_0,\ldots,K_k$ of $K$ such that
$K=\bigcup_{\ell=0}^{k}K_\ell$ and, for each $\ell\in\{0,\ldots,k\}$, every $m$-dimensional
simplicial complex $P$, and every simplicial map $\nu:P\longrightarrow K_\ell ,$
the maps $\varphi_1\circ\nu,\ldots,\varphi_n\circ\nu$ belong to the same contiguity class.

Consider the cover $\sd(K_0),\ldots,\sd(K_k)$ of $\sd(K)$. We claim that this cover satisfies the defining property of $\SD_m(\sd(\varphi_1),\ldots,\sd(\varphi_n)).$ 

Let us fix $K_l$. Let $P$ be any $m$-dimensional simplicial complex, and $\eta:P\longrightarrow\sd(K_\ell)$ be any simplicial map.

Since $c\circ\eta:P\to K_\ell$ is a simplicial map where $c:\sd(K_\ell)\to K_\ell$ is the canonical barycentric subdivision map, 
$\varphi_1\circ c\circ\eta,\ldots,\varphi_n\circ c\circ\eta$ belong to the same contiguity class.

By Corollary~\ref{m-barycentric},
$\sd(\varphi_1)\circ \sd(c)\circ\sd(\eta),\ldots,\sd(\varphi_n)\circ \sd(c)\circ\sd(\eta)$  are in the same contiguity class.

On the other hand, since $\sd(c)\circ \sd(\eta)=c_{\sd K_l}\circ \sd(\eta)$, they are also in the same contiguity class. Thus, $\sd(\varphi_1)\circ c_{\sd K_l}\circ \sd(\eta)\sim\ldots\sim\sd(\varphi_n)\circ c_{\sd K_l}\circ \sd(\eta)$. 


If we apply Lemma~\ref{simp-approx} to the simplicial map $\eta:P\to\sd(K_\ell)$, then it  follows that
$\eta\circ c_P\sim_c c_{\sd(K_\ell)}\circ\sd(\eta).$ Hence 
$\sd(\varphi_1)\circ \eta\circ c_P\sim\ldots\sim\sd(\varphi_n)\circ \eta\circ c_P$. 


Finally, since $c_P:\sd(P)\to P$ is a simplicial approximation to the identity,
the simplicial approximation theorem implies that
$\sd(\varphi_i)\circ\eta$ and $\sd(\varphi_i)\circ\eta\circ c_P$ are contiguous.
Hence $\sd(\varphi_1)\circ\eta,\ldots,\sd(\varphi_n)\circ\eta$ belong to the same contiguity class.

Since $\sd(K_\ell)$ is arbitary, we obtain that
$\SD_m(\sd(\varphi_1),\ldots,\sd(\varphi_n))\le k.$
\end{proof}





\color{black}
\section{Applications of sequential $m$-contiguity distance}

\subsection{$m$-simplicial LS category}

\begin{definition}\cite{EYKB} Let $K$ be a simplical complex and fix some integer $m\geq 1$. The $m$-simplicial LS category $scat_m(K)$ is the least integer $k\geq 0$ such that $K$ has a cover $K_0,\dots,K_k$ where $K_0,\dots,K_k$ are subcomplexes of $K$ and each $K_j$ has the property that any simplicial map $\eta:P\rightarrow K_j$ from an $m$-dimensional simplicial complex $P$, $\iota_j\circ\eta$ and $c_{v_0}\circ\eta$ are in the same contiguity class where $\iota_j:K_j\rightarrow K$ is the inclusion and $c_{v_0}:K_j\rightarrow K$ is the constant map.

\end{definition}

\begin{theorem}\label{mLS} Let K be a simplicial complex with a fixed vertex $v_{0}$, and let $r<s$. For each $r$-tuple $1\leq i_{1}<\cdots<i_{r}\leq s$, define $\mathcal{I}_{(i_{1},\cdots,i_{r})}:K^{r}\to K^{s}$ by  $\mathcal{I}_{(i_{1},\cdots,i_{r})}(\sigma_{1},\cdots,\sigma_{r})=(\varphi_{1},\cdots,\varphi_{s})$, where 

$$
\varphi_{k}=\begin{cases}
\sigma_{m} \,\,\, \text{if}  \,\, k=i_{m} \\
v_{0} \,\,\, \text{otherwise}
\end{cases}
$$
For simplicity, the elements of the set $\{\mathcal{I}_{(i_{1},\cdots,i_{r})}|1\leq i_{1}<\cdots<i_{r}\leq s\}$ is denoted by $\mathcal{I}_{j}$.  
Then
  $$\SD_m(\mathcal{I}_1,\mathcal{I}_2,\cdots,\mathcal{I}_{\binom{s}{r}})=\scat_m(K^{r})$$  
\end{theorem}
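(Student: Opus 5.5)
Write $\mathcal{I}_j=\mathcal{I}_{(i_1,\dots,i_r)}$, and let $\mathcal{I}_0:=\mathcal{I}_{(1,\dots,r)}$ (the one putting $\sigma$ in the first $r$ slots). I would prove the two inequalities separately, working directly with covers of $K^r$ by subcomplexes $K_0,\dots,K_k$. The key observation is that each $\mathcal{I}_j$ is a composition $\mathcal{I}_j=\tau_j\circ(\iota\times c)$, where $\iota\times c:K^r\to K^r\times K^{s-r}=K^s$ is $\sigma\mapsto(\sigma,v_0,\dots,v_0)$ and $\tau_j$ is a coordinate permutation of $K^s$, hence a simplicial isomorphism.

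For $\SD_m(\mathcal{I}_1,\dots,\mathcal{I}_{\binom{s}{r}})\le \scat_m(K^r)$, take a cover realising $\scat_m(K^r)=k$. For $\eta:P\to K_\ell$ with $\dim P=m$ we have $\iota_\ell\circ\eta\sim c_{v_0}\circ\eta$, where $c_{v_0}$ is now the constant map to $(v_0,\dots,v_0)\in K^r$, via a chain of contiguities $\iota_\ell\circ\eta=g_0\sim_c g_1\sim_c\cdots\sim_c g_p=c$. I will check that contiguity is preserved under $\tau_j\circ(-\times c_{v_0})$: if $g\sim_c g'$ into $K^r$, then for a simplex $\rho$ of $P$ the vertex set $(g(\rho)\cup g'(\rho))\times\{v_0\}^{s-r}$ is a simplex of $K^s$ (a categorical product simplex, since each coordinate projection is a simplex), and a permutation of coordinates preserves this. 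Hence $\mathcal{I}_j\circ\eta\sim \tau_j\circ(c\times c)\circ\eta=$ the constant map at $(v_0,\dots,v_0)$, for every $j$. So all $\mathcal{I}_j\circ\eta$ lie in one contiguity class, giving the inequality.

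For the reverse inequality, take a cover realising $\SD_m(\mathcal{I}_1,\dots)=k$, so $\mathcal{I}_j\circ\eta\sim\mathcal{I}_{j'}\circ\eta$ for all $j,j'$ and all $\eta:P\to K_\ell$. Since $r<s$, pick the tuple $\mathcal{I}_{1'}:=\mathcal{I}_{(s-r+1,\dots,s)}$... more robustly, I use the projection $p:K^s\to K^r$ onto the first $r$ coordinates (simplicial, and composition with it preserves contiguity, by Proposition~\ref{left} style reasoning or directly). Then $p\circ\mathcal{I}_0=\id_{K^r}$. The other tuple $J=(s-r+1,\dots,s)$ differs from $(1,\dots,r)$, so at least one of the first $r$ slots of $\mathcal{I}_J$ is $v_0$, though not all unless $s\ge 2r$. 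This is the main obstacle: a single comparison only kills some coordinates. I would handle it coordinatewise: for each $t\le r$ choose a tuple $J_t$ omitting index $t$ and use the $t$-th coordinate projection $p_t:K^s\to K$. Then $p_t\circ\mathcal{I}_0\circ\eta=\pi_t\circ\eta$ and $p_t\circ\mathcal{I}_{J_t}\circ\eta=c_{v_0}$, hence $\pi_t\circ\eta\sim c_{v_0}$ for each $t$, with $\pi_t$ the $t$-th projection of $K^r$.

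Finally, I assemble these: the maps $\pi_t\circ\eta:P\to K$ are each in the contiguity class of a constant. Then $\eta=(\pi_1\eta,\dots,\pi_r\eta)$ and the constant map are in the same class by the product compatibility of Lemma~\ref{easy lemma}, applied iteratively (the analogue for $r$ factors: contiguous coordinates give contiguous tuples, since a vertex set in $K^r$ is a simplex iff each projection is). Thus $\iota_\ell\circ\eta\sim c_{v_0}\circ\eta$, the same cover works for $\scat_m(K^r)$, and equality follows.
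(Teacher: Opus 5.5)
Your proof is correct and follows essentially the paper's route. For $\SD_m\le\scat_m(K^r)$ you push the contiguity $\iota_\ell\circ\eta\sim c\circ\eta$ through each $\mathcal{I}_j$, so every $\mathcal{I}_j\circ\eta$ lies in the class of the constant map at $(v_0,\dots,v_0)$; the paper does the same thing coordinatewise and then permutes components. For the reverse inequality you compose with the coordinate projections of $K^s$ to get $\pi_t\circ\eta\sim c_{v_0}$ for every $t$, then reassemble one coordinate at a time, exactly as the paper does; your explicit choice of a tuple $J_t$ omitting $t$ (possible since $r<s$) makes precise the step the paper only sketches with ``it suffices to write only one of the rows'', which by itself is only enough when $s\ge 2r-1$.
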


\begin{proof}
Suppose that $\scat_{m}(K^{r})=k$. Then $K^{r}$ has a cover $K_0,\dots,K_k$ where $K_0,\dots,K_k$ are subcomplexes of $K^{r}$ and each $K_j$ has the property that any simplicial map $\eta:P\rightarrow K_j$ from an $m$-dimensional simplicial complex P, $\bar{\iota_j}\circ\eta$ and $\bar{c_{j}}\circ\eta$ are in the same contiguity class where $\bar{\iota_j}:K_j\rightarrow K^{r}$ is the inclusion and $\bar{c_{j}}:K_j\rightarrow K^{r}$ is the constant map.
Here $\bar{\iota_{j}}=(pr_{1},\cdots,pr_{r})$ where $pr_{i}:K^{r}\to K$ is the projection to the i-th factor and $\bar{c_{j}}=(c_{v_{0}},\cdots,c_{v_{0}})$ is the constant map. Note that K is edge-path connected, then one can choose all the constant maps as the constant map $c_{v_{{0}}}:K^{r}\to K$. Thus, $\pr_{1}\circ \overline{\iota}_j\circ\eta\sim c_{v_{0}}\circ\overline{\iota}_j\circ\eta,  \ldots ,\pr_{r}\circ \overline{\iota}_j\circ\eta\sim c_{v_{0}}\circ\overline{\iota}_j\circ\eta$.


Hence we obtain $$pr_{1}\circ\overline{\iota}_j\circ\eta\sim pr_{2}\circ\overline{\iota}_j\circ\eta\sim\cdots \sim pr_{r}\circ\overline{\iota}_j\circ\eta\sim c_{v_{0}}\circ\overline{\iota}_j\circ\eta.$$ 

Notice that the maps $\mathcal{I}_{t}$ can be obtained from $(pr_{1},\cdots,pr_{r},c_{v_{0}},\cdots,c_{v_{0}})$ by permuting the components for all $t=1,\cdots,\binom{s}{r}$. 

Therefore, we obtain that $$\mathcal{I}_{1}\circ\eta\sim \mathcal{I}_{2}\circ\eta\sim\cdots\sim\mathcal{I}_{\binom{s}{r}}\circ\eta$$
for each $K_{j}$. This concludes $\SD_m(\mathcal{I}_1,\mathcal{I}_2,\cdots,\mathcal{I}_{\binom{s}{r}})\leq k$.

To prove the reverse inequality, we assume $\SD_m(\mathcal{I}_1,\mathcal{I}_2,\cdots,\mathcal{I}_{\binom{s}{r}})=k$. Then there exist subcomplexes $K_{0},\cdots,K_{k}$ of $K^r$ such that $\mathcal{I}_{1}\circ\eta\sim \mathcal{I}_{2}\circ\eta\sim\cdots\sim\mathcal{I}_{\binom{s}{r}}\circ\eta$ for all $j=0,\cdots,k$ and all maps $\eta:P\to K_{j}$ from any $m$-dimensional simplicial complex $P$.

Our goal is to show that $\bar{\iota_{j}}\circ \eta$ and $\overline{c}_j\circ \eta$ are in the same contiguity class for all $j=0,\cdots,k$ where $\bar{\iota_{j}}:K_{j}\to K^{r}$ is the inclusion and $\bar{c_{j}}:K_{j}\to K^{r}$ is a constant map.

Let $pr_{i}:K^{r}\to K$ and $\overline{pr_t}:K^s\to K$ be the projection maps for $i=1,\cdots,r$ and $t=1,\cdots,s.$ Since $\binom{s}{r}\geq r$, we obtain the following for each $K_{j}$

$$\overline{\pr_{1}}\circ\mathcal{I}_{1}\circ\eta\sim \overline{\pr_{1}}\circ\mathcal{I}_{2}\circ\eta\sim \cdots\sim \overline{\pr_{1}}\circ\mathcal{I}_{\binom{s}{r}}\circ\eta$$
$$\overline{\pr_{2}}\circ\mathcal{I}_{1}\circ\eta\sim \overline{\pr_{2}}\circ\mathcal{I}_{2}\circ\eta \sim \cdots\sim \overline{\pr_{2}}\circ\mathcal{I}_{\binom{s}{r}}\circ\eta$$
$$\cdots$$
$$\overline{\pr_{s}}\circ\mathcal{I}_1 \circ\eta\sim \cdots \sim\overline{\pr_{s}}\circ\mathcal{I}_{\binom{s}{r}}\circ\eta$$

Notice that it suffices to write only one of the rows. Then we obtain $(\pr_{1},\pr_{2},\cdots,\pr_{r})\sim (c_{v_{0}},c_{v_{0}},\cdots,c_{v_{0}})$ which implies $\bar{\iota_{j}}\circ\eta\sim \bar{c_{j}}\circ \eta.$ Therefore the proof is completed. 
\end{proof}

\begin{remark}
The authors recovered the $m$-simplicial Lusternik Schnirelmann category in \cite{EYKB} in the view of $m$-contiguity distance, provided that $r=1$ and $s=2$.
\end{remark}

By virtue of the inclusions $\mathcal{I}_{(i_{1},\cdots,i_{r})}$ defined in the theorem above, an example demonstrating that the inequality in Theorem~\ref{SD-barycentric} can be strict is provided below.

\begin{example}\label{examplenew}
    Consider the simplicial complex $K$ as given in Figure \ref{f1}. In Example 3.6 in Ref. \cite{FTMVMV2}, it is showed that $\scat(K)=2$ while $\scat(\sd K)=1$. Since $K$ and $\sd K$ are 1-dimensional, by Corollary 3.4 in  \cite{EYKB}, we have $\scat_1(K)=\scat(K)$ and $\scat_1(\sd K)=\scat(\sd K).$

    For a fixed vertex $v_0\in K$, consider the simplicial inclusion maps $\mathcal{I}_1, \mathcal{I}_2, \mathcal{I}_3:K\to K^3$ given by $\mathcal{I}_1(v)=(v,v_0,v_0), \mathcal{I}_2(v)=(v_0,v,v_0)$ and $\mathcal{I}_3(v)=(v_0,v_0,v).$ Hence, by Theorem \ref{mLS}, we have
    $$ 1=\SD_1(\sd\mathcal{I}_1, \sd\mathcal{I}_2, \sd\mathcal{I}_3)<\SD_1(\mathcal{I}_1, \mathcal{I}_2, \mathcal{I}_3) =2. $$
\end{example}

\begin{corollary}
    The m-simplicial Lusternik-Schnirelmann category of simplicial complex $K$, $scat_{m}(K)$, is an invariant of strong homotopy type.  
\end{corollary}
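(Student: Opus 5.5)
Suppose $K\sim K'$ via simplicial maps $\varphi:K\to K'$ and $\psi:K'\to K$ with $\varphi\circ\psi\sim \id_{K'}$ and $\psi\circ\varphi\sim\id_K$. We plan to deduce $\scat_m(K)=\scat_m(K')$ from the characterisation of Theorem~\ref{mLS} with $r=1$, $s=2$ (the case recorded in the Remark), combined with the strong homotopy invariance of Theorem~\ref{mSD}. That characterisation gives
\[
\scat_m(K)=\SD_m(\mathcal{I}_1,\mathcal{I}_2), \qquad \mathcal{I}_1=(\id,c_{v_0}),\ \mathcal{I}_2=(c_{v_0},\id):K\to K\times K .
\]
Similarly $\scat_m(K')=\SD_m(\mathcal{I}'_1,\mathcal{I}'_2)$, where $\mathcal{I}'_1,\mathcal{I}'_2:K'\to K'\times K'$ are built from a base vertex $w_0=\varphi(v_0)$. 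By edge-path connectivity, the choice of base vertex only changes maps within their contiguity class, so Proposition~\ref{prop32} makes this choice harmless.

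We apply Theorem~\ref{mSD} with $n=2$. Take $\beta_1=\beta_2=\psi:K'\to K$; it has a right strong homotopy inverse $\varphi$, since $\psi\circ\varphi\sim\id_K$. Take $\alpha_1=\alpha_2=\varphi\times\varphi:K\times K\to K'\times K'$; it has a left strong homotopy inverse $\psi\times\psi$. To see this, use functoriality of the categorical product to write $(\psi\times\psi)\circ(\varphi\times\varphi)=(\psi\circ\varphi)\times(\psi\circ\varphi)$, and then Lemma~\ref{easy lemma} gives that this is $\sim \id_K\times\id_K=\id_{K\times K}$. The hypotheses $\beta_1\sim\beta_2$ and $\alpha_1\sim\alpha_2$ hold trivially because the maps are equal.

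It remains to check that the square commutes up to contiguity, that is, $(\varphi\times\varphi)\circ\mathcal{I}_j\circ\psi\sim\mathcal{I}'_j$ for $j=1,2$. For $j=1$ the left-hand side equals $(\varphi\circ\psi,\ c_{\varphi(v_0)})$. Since $\varphi\circ\psi\sim\id_{K'}$ and constants are contiguous to themselves, Lemma~\ref{easy lemma} gives $(\varphi\psi, c_{w_0})\sim(\id_{K'},c_{w_0})=\mathcal{I}'_1$. The case $j=2$ is symmetric. Theorem~\ref{mSD} then yields $\SD_m(\mathcal{I}_1,\mathcal{I}_2)=\SD_m(\mathcal{I}'_1,\mathcal{I}'_2)$, and hence $\scat_m(K)=\scat_m(K')$.

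The main obstacle is the bookkeeping needed to apply Lemma~\ref{easy lemma} to maps into a product written in the form $(f,g)$ rather than $f\times g$. We handle it by factoring through the diagonal: $(f,g)=(f\times g)\circ\Delta$. Contiguity of $f\times g$ with $f'\times g'$ is then preserved under precomposition with $\Delta$, since contiguity is preserved by composition. The same approach works for general $r<s$ with $\alpha=\varphi^{\times s}$ if one prefers the version in which $\scat_m(K^r)$ appears, but the case $r=1$ suffices here.
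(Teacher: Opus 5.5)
Your proposal is correct and is the paper's own argument: the paper proves this corollary in one line by combining Theorem~\ref{mLS} (here with $r=1$, $s=2$) and Theorem~\ref{mSD}. You fill in the details the paper omits, namely the choices $\beta_1=\beta_2=\psi$ and $\alpha_1=\alpha_2=\varphi\times\varphi$ and the check that the square commutes up to contiguity; working directly with the form $\scat_m(K)=\SD_m(\id_K,c_{v_0})$ and $\alpha_j=\varphi$ would spare you the product bookkeeping, but that is only a cosmetic simplification.
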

\begin{proof}
    This follows from Theorem \ref{mLS} and Theorem \ref{mSD}. 
\end{proof}

We need the following lemma to prove one of the main results in the sequential $m$-discrete topological complexity, see Theorem~\ref{maininequality}. 

\begin{lemma} \label{inequlaitymscat}
	For simplicial maps $\varphi_{1},\cdots,\varphi_{r}:K\rightarrow K'$, we have $$
	\SD_m(\varphi_{1},\cdots,\varphi_{r})\leq \scat_m(K). $$
\end{lemma}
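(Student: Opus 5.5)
We take a cover realising $\scat_m(K)=k$ and show that the same cover witnesses $\SD_m(\varphi_1,\dots,\varphi_r)\leq k$. So let $K_0,\dots,K_k$ be subcomplexes of $K$ covering $K$ such that, for every $j$, every $m$-dimensional simplicial complex $P$ and every simplicial map $\eta:P\to K_j$, the maps $\iota_j\circ\eta$ and $c_{v_0}\circ\eta$ are in the same contiguity class. Here $\iota_j:K_j\hookrightarrow K$ is the inclusion and $c_{v_0}$ is a constant map. Since $K$ is edge-path connected, the choice of the vertex $v_0$ does not matter.

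Fix $j$, an $m$-dimensional $P$ and a simplicial map $\eta:P\to K_j$. View $\eta$ also as a map into $K$, namely $\iota_j\circ\eta$. We need $\varphi_1\circ\iota_j\circ\eta\sim\dots\sim\varphi_r\circ\iota_j\circ\eta$. The first step is that composition on the left preserves contiguity: if $f\sim_c g$ then $\varphi\circ f\sim_c\varphi\circ g$. Indeed, a simplicial map carries the simplex $f(\sigma)\cup g(\sigma)$ to a simplex, and by induction along a chain this extends to contiguity classes. Applying this to $\iota_j\circ\eta\sim c_{v_0}\circ\eta$ gives, for every $i$,
$$\varphi_i\circ\iota_j\circ\eta\sim\varphi_i\circ c_{v_0}\circ\eta=c_{\varphi_i(v_0)},$$
where the right-hand side is the constant map $P\to K'$ at $\varphi_i(v_0)$.

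The remaining step is to identify these constant maps with one another. Since all complexes are assumed edge-path connected (in particular $K'$), any two constant simplicial maps into $K'$ are in the same contiguity class, as recalled in the preliminaries. Hence $c_{\varphi_i(v_0)}\sim c_{\varphi_{i'}(v_0)}$ as maps $P\to K'$ for all $i,i'$. By transitivity, $\varphi_i\circ\eta\sim\varphi_{i'}\circ\eta$ for all $i,i'\in\{1,\dots,r\}$. Therefore $K_0,\dots,K_k$ satisfy the defining property of $\SD_m(\varphi_1,\dots,\varphi_r)$, and so $\SD_m(\varphi_1,\dots,\varphi_r)\leq k=\scat_m(K)$.

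The argument is short. The only point that needs care is this last one: the constant maps are precomposed with $\eta$, so their domain is $P$ rather than $K_j$. I would check that the edge-path connectedness convention is applied to the target $K'$ and to arbitrary domains such as $P$. Concretely, an edge path $\varphi_i(v_0)=w_0,w_1,\dots,w_t=\varphi_{i'}(v_0)$ in $K'$ gives consecutive contiguous constant maps $c_{w_a}\sim_c c_{w_{a+1}}$ on any domain, because $\{w_a,w_{a+1}\}$ is a simplex of $K'$.
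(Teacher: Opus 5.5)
Your proof is correct and follows the paper's first proof essentially verbatim: you take the cover realising $\scat_m(K)$, post-compose $\iota_j\circ\eta\sim c_{v_0}\circ\eta$ with each $\varphi_i$ to reach the constant maps $c_{\varphi_i(v_0)}$, and link these by edge-path connectedness. Your observation that it is really the edge-path connectedness of the target $K'$ that is used is correct; the paper writes $K$, which is harmless under its blanket convention that all complexes are edge-path connected.
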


\begin{proof}
Let $\scat_m(K)=k$. Then there exist subcomplexes $K_0,\dots,K_k$ of $K$ covering $K$ such that each $K_j$ has the property that any map $\eta:P\rightarrow K_j$ from an $m$-dimensional simplicial complex $P$, $\iota_j\circ\eta\sim c_{v_0}\circ \eta$ holds where   $\iota_j:K_j\rightarrow K$ is the inclusion and $c_{v_0}:K_j\rightarrow K$ is a constant map. Applying each simplicial map to the equivalence, we obtain the following 
	$$ \varphi_{1}\circ\eta\sim \varphi_{1}\circ  c_{v_0} \sim c_{1}$$
    $$ \varphi_{1}\circ\eta\sim \varphi_{1}\circ  c_{v_0} \sim c_{2}$$
    $$\cdots$$
    $$ \varphi_{r}\circ\eta\sim \varphi_{r}\circ  c_{v_0} \sim c_{r}$$
    
where $c_{i}$ are constant maps for all $i=1,\cdots,r$. 

Since $K$ is edge-path connected simplicial complex, for each $K_{j}$ we obtain 

$$\varphi_{1}\circ\eta\sim\varphi_{2}\circ\eta\sim\cdots\varphi_{r}\circ\eta.$$ 

This shows that $\SD_m(\varphi_{1},\cdots,\varphi_{r})\leq k$.

\end{proof}

The following is an alternative proof of Lemma \ref{inequlaitymscat}

\begin{proof}
    If we take $K''=K$, $\mu=id:K\to K$ identity map and $\mu'=c_{v_0}:K\to K$ constant map in Proposition \ref{propmu}, then by Proposition \ref{propmu} and by Theorem 3.4 in \cite{EYKB},  we have 
    $$ \SD_m(\varphi_1,\dots,\varphi_n)=\SD_m(\varphi_1\circ id,\dots,\varphi_n\circ id)\leq \SD_m(id,c_{v_0})=\scat_m(K).
    $$
\end{proof}

\begin{proposition} \label{collapse} \cite{EYKB}
    If $K$ is strongly collapsible then $\scat_m(K)=0.$
\end{proposition}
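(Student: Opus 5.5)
The plan is to work directly from the definition of $\scat_m$: take the one-element cover $K_0=K$ and show it has the required property. Strong collapsibility of $K$ will give the single contiguity relation we need. Alternatively, one could combine Theorem \ref{mLS} with $r=1$, $s=2$ and the invariance Theorem \ref{mSD}, but the direct route is shorter.

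First, unpack the hypothesis. Strong collapsibility means $K\sim v_0$, so there are simplicial maps $\varphi:K\to\{v_0\}$ and $\psi:\{v_0\}\to K$ with $\psi\circ\varphi\sim \id_K$ (the other composite is automatically the identity of the point). The map $\varphi$ is the unique map to the point, and $\psi$ picks out some vertex $w\in K$. Hence $\psi\circ\varphi=c_w$, so $\id_K\sim c_w$. Since $K$ is edge-path connected, all constant maps lie in one contiguity class, so $\id_K\sim c_{v_0}$ for the fixed base vertex $v_0$.

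Next, precompose with an arbitrary test map. Let $\eta:P\to K$ be any simplicial map from an $m$-dimensional complex $P$. Precomposition preserves contiguity, since a contiguity $f\sim_c g$ gives $f\circ\eta\sim_c g\circ\eta$ simplex by simplex, and so it also preserves contiguity classes. Applying this to the chain from $\id_K$ to $c_{v_0}$ gives $\iota_0\circ\eta=\eta\sim c_{v_0}\circ\eta$, where $\iota_0=\id_K$ is the inclusion of $K_0=K$. Thus the cover $\{K_0\}$ satisfies the defining property of $\scat_m$, so $\scat_m(K)\le 0$, and therefore $\scat_m(K)=0$.

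There is no real obstacle here. The only points needing care are that the vertex $w$ coming from the strong homotopy equivalence may differ from $v_0$, which edge-path connectedness handles, and that contiguity classes are stable under precomposition, which is immediate from the definition.
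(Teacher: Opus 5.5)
Your proof is correct and complete. The one-element cover $\{K\}$ works for three reasons: strong collapsibility gives $\id_K\sim c_w$ for some vertex $w$; edge-path connectedness upgrades this to $\id_K\sim c_{v_0}$; and contiguity classes are stable under precomposition with any $\eta:P\to K$.

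The paper gives no argument of its own here; it imports the statement from \cite{EYKB}. Within the paper's framework there are two short routes:
\begin{itemize}
\item combine the identification $\scat_m(K)=\SD_m(\id_K,c_{v_0})$ (Theorem 3.4 of \cite{EYKB}, invoked in the alternative proof of Lemma~\ref{inequlaitymscat}) with Proposition~\ref{Prop1}(2);
\item use the comparison $\scat_m(K)\le\scat(K)$ together with $\scat(K)=0$ for strongly collapsible $K$.
\end{itemize}
Once unwound, both routes rest on exactly the relation $\id_K\sim c_{v_0}$ plus precomposition, which you verify by hand. Your version is therefore self-contained: it uses only the definitions. It avoids the external result, and also Theorems~\ref{mLS} and~\ref{mSD}, which your suggested alternative would need.

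A small refinement: you do not even need the standing edge-path connectedness hypothesis. Evaluate a chain $\id_K=f_1\sim_c\cdots\sim_c f_n=c_w$ at a vertex $v$. Consecutive entries of $v=f_1(v),\dots,f_n(v)=w$ span a simplex, because $f_i\sim_c f_{i+1}$ on the $0$-simplex $\{v\}$, so this sequence is an edge path. Hence a strongly collapsible complex is automatically edge-path connected.
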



\begin{proposition}
    If $K$ is strongly collapsible then $\SD_m(\varphi_1,\dots,\varphi_n)=0$ for any $\varphi_1,\dots,\varphi_n:K\to K'$ simplicial maps.
\end{proposition}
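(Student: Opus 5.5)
The plan is to combine Lemma~\ref{inequlaitymscat} with Proposition~\ref{collapse}; essentially no new construction is needed. Since $K$ is strongly collapsible, Proposition~\ref{collapse} gives $\scat_m(K)=0$. Lemma~\ref{inequlaitymscat} then yields
$$0\leq \SD_m(\varphi_1,\dots,\varphi_n)\leq \scat_m(K)=0$$
for any simplicial maps $\varphi_1,\dots,\varphi_n:K\to K'$. This forces $\SD_m(\varphi_1,\dots,\varphi_n)=0$.

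As a sanity check, and as a fallback independent of the cited lemma, I would also give the direct argument. Strong collapsibility means there are simplicial maps $K\to\{v_0\}\to K$ whose composite $c_{v_0}:K\to K$ satisfies $c_{v_0}\sim \id_K$. Composing on the left with each $\varphi_i$ preserves contiguity, so $\varphi_i=\varphi_i\circ\id_K\sim\varphi_i\circ c_{v_0}$, which is a constant map at $\varphi_i(v_0)$.

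Because $K'$ is edge-path connected (the standing assumption of the paper), all constant maps into $K'$ lie in one contiguity class. Hence $\varphi_1\sim\varphi_2\sim\cdots\sim\varphi_n$. Proposition~\ref{Prop1}(2), or equivalently precomposing with any $\eta:P\to K$ using the single-piece cover $K_0=K$, then gives $\SD_m(\varphi_1,\dots,\varphi_n)=0$.

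The only step needing care is that the constants $\varphi_i\circ c_{v_0}$ may sit at different vertices of $K'$, so edge-path connectedness of the \emph{target} is what is really used. In the lemma-based route this is hidden inside Lemma~\ref{inequlaitymscat}, so I would state it explicitly.
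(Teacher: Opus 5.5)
Your main argument is exactly the paper's proof: Proposition~\ref{collapse} gives $\scat_m(K)=0$, and Lemma~\ref{inequlaitymscat} then forces $\SD_m(\varphi_1,\dots,\varphi_n)=0$. Your direct fallback is also correct, and it gives the stronger conclusion $\varphi_1\sim\cdots\sim\varphi_n$, hence even $\SD(\varphi_1,\dots,\varphi_n)=0$; you are also right that the constant maps $\varphi_i\circ c_{v_0}$ lie in one contiguity class because the \emph{target} $K'$ is edge-path connected, not because $K$ is, as the proof of Lemma~\ref{inequlaitymscat} states.
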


\begin{proof}
    By Proposition \ref{collapse}, we have $\scat_m(K)=0$. Applying Lemma \ref{inequlaitymscat} to that result, we conclude that $\SD_m(\varphi_1,\dots,\varphi_n)=0.$
\end{proof}

The converse does not hold in general, as shown in the following example.

\begin{example}
    
Consider the simplicial complex $K$ given in Figure \ref{pic}. Let  $id:K\to K $ be the identity map and, for each $i=0,\ldots,5$, let $c_i:K\to K$ denote the constant map sending every vertex of $K$ to the vertex $i$. Let $P$ be a $0$-dimensional simplicial complex. For a simplicial map $\eta:P\to K$, there are exactly six possibilities, denoted by $\eta_0,\ldots,\eta_5$. For each $j=0,\ldots,5$, the image of $\eta_j$ is the vertex $i$. By checking all possible cases, we obtain $$id\circ\eta_j\sim c_0\circ\eta_j\sim c_1\circ\eta_j\sim\ldots\sim c_5\circ\eta_j,$$ for all $j$. Therefore, $\SD_0(id,c_0,\ldots,c_5)=0.$

On the other hand, it is known from \cite{FTMVV} that $scat(K)=1$, and hence $K$ is not strongly collapsible.

\begin{figure}
\centering
\begin{tikzpicture}[scale=0.5]

\fill[gray!40] (0,0) -- (4,6.8) -- (8,0) -- cycle;

\draw (0,0) -- (3,2.8);
\draw (0,0) -- (4,1);
\draw (4,6.8) -- (3,2.8);
\draw (4,6.8) -- (5,2.8);
\draw (8,0) -- (4,1);
\draw (8,0) -- (5,2.8);
\draw (3,2.8) -- (4,1);
\draw (3,2.8) -- (5,2.8);
\draw (5,2.8) -- (4,1);

\draw (0,0) -- (4,6.8);
\draw (0,0) -- (8,0);
\draw (8,0) -- (4,6.8);

\filldraw (0,0) circle (4pt);
\filldraw (4,6.8) circle (4pt);
\filldraw (8,0) circle (4pt);
\filldraw (3,2.8) circle (4pt);
\filldraw (4,1) circle (4pt);
\filldraw (5,2.8) circle (4pt);

\node[below left]  at (0,0) {$0$};
\node[above]       at (4,6.8) {$1$};
\node[below right] at (8,0) {$2$};

\node[left]        at (3,2.8) {$3$};
\node[below]       at (4,1) {$4$};
\node[right]       at (5,2.8) {$5$};

\end{tikzpicture}
\caption{}
\label{pic}
\end{figure}
\end{example}

\subsection{Sequential $m$-discrete  topological complexity}

\begin{definition}
The sequential $m$-discrete  topological complexity $\TC_{r}^m(K)$ of a simplicial complex $K$ is the least non-negative integer $k$ such that $K^{r}$ can be covered by subcomplexes $\Omega_0, \cdots,\Omega_k$ of $K^{r}$, each of which satisfies that there exists simplicial map $\sigma_j : \Omega_j \to K$ such that for any map $\eta_j:P\rightarrow \Omega_j$  from an $m$-dimensional simplicial complex $P$, $\Delta\circ \sigma_j\circ \eta_j \sim \iota_{j}\circ \eta_j$ holds where $\iota_j : \Omega_j \hookrightarrow K^{r}$ is the inclusion and $\Delta:K\rightarrow K^{r}$ is the diagonal map.
\end{definition}

Such $\Omega\subset K^{r}$ subcomplexes are called \textit{Rudyak subcomplex of dimension $m$} or simply \textit{Rudyak subcomplex} if it is clear from the context. Notice that it should be considered as an analogue of the $r$-Farber subcomplex as in \cite{ABCD, FTMVMV1} which refers to the Farber subcomplexes used to set up the $r$-th (sequential) discrete topological complexity. 

\begin{theorem}\label{charc}
    Let $\Omega \subset K^{r}$ be a subcomplex and $\Updelta:K\to K^{r}$ be diagonal map, then the following conditions are equivalent.
    \begin{itemize}
        \item [(1)] $\Omega$ is Rudyak subcomplex.
        \item [(2)] $pr_{i}|_{\Omega}\circ\eta\sim pr_{j}|_{\Omega}\circ\eta$ for all $i,j\in \{1,\cdots,r\}$.
        \item [(3)] One of the restrictions $pr_{1}|_{\Omega}\circ\eta,pr_{2}|_{\Omega}\circ\eta, \cdots, pr_{r}|_{\Omega}\circ\eta$ is a section of $\Updelta$ (up to contiguity).
    \end{itemize}
\end{theorem}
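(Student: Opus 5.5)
The natural route is the cycle (1)$\Rightarrow$(2)$\Rightarrow$(3)$\Rightarrow$(1). Throughout, $\eta:P\to\Omega$ is an arbitrary simplicial map from an $m$-dimensional complex $P$, and conditions (2) and (3) are read as holding for every such $\eta$. The basic tool is the componentwise description of maps into the categorical product. Two maps $f,g:Q\to K^{r}$ are contiguous if and only if $\pr_i\circ f$ and $\pr_i\circ g$ are contiguous for every $i$. The reason is that a set of vertices of $K^{r}$ is a simplex exactly when each coordinate projection is a simplex. Chaining contiguities coordinatewise then shows that $f\sim g$ when each $\pr_i\circ f\sim\pr_i\circ g$; if the chains have different lengths, pad them with repetitions, since $\sim_c$ is reflexive. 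Conversely, $f\sim g$ implies $\pr_i\circ f\sim\pr_i\circ g$, because composition preserves contiguity.

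\emph{Step (1)$\Rightarrow$(2).} Let $\sigma:\Omega\to K$ be the map with $\Updelta\circ\sigma\circ\eta\sim\iota\circ\eta$ for all $\eta$. Composing with $\pr_i$ gives
$$\pr_i|_{\Omega}\circ\eta=\pr_i\circ\iota\circ\eta\sim\pr_i\circ\Updelta\circ\sigma\circ\eta=\sigma\circ\eta$$
for every $i$. By transitivity, $\pr_i|_\Omega\circ\eta\sim\pr_j|_\Omega\circ\eta$ for all $i,j$.

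\emph{Step (2)$\Rightarrow$(3).} Take $\sigma=\pr_1|_\Omega$. For every $i$ we have $\pr_i\circ\Updelta\circ\pr_1|_\Omega\circ\eta=\pr_1|_\Omega\circ\eta\sim\pr_i|_\Omega\circ\eta=\pr_i\circ\iota\circ\eta$. The coordinatewise criterion then gives $\Updelta\circ\pr_1|_\Omega\circ\eta\sim\iota\circ\eta$. This is exactly the statement that the restriction $\pr_1|_\Omega$ is a section of $\Updelta$ up to contiguity after precomposition with $\eta$.

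\emph{Step (3)$\Rightarrow$(1).} If some $\pr_k|_\Omega$ satisfies $\Updelta\circ\pr_k|_\Omega\circ\eta\sim\iota\circ\eta$, then $\sigma_j:=\pr_k|_\Omega$ is a simplicial map $\Omega\to K$ witnessing the Rudyak condition, so $\Omega$ is a Rudyak subcomplex.

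The main obstacle I expect is justifying the coordinatewise contiguity criterion for the iterated categorical product $K^{r}$, specifically assembling $r$ separate contiguity chains into one chain in $K^{r}$. My plan there is to argue by induction on $r$, using Lemma~\ref{easy lemma} applied to $K^{r-1}\times K$. At each stage, the maps into the product are viewed as $(f_1,f_2)$ with $f_1$ landing in $K^{r-1}$ and $f_2$ in $K$. Alternatively, I would change one coordinate at a time: if $f_i\sim_c g_i$, then $(g_1,\dots,g_{i-1},f_i,\dots)\sim_c(g_1,\dots,g_i,f_{i+1},\dots)$, since the union on each simplex is a product of simplices coordinatewise. A second point to fix is the interpretation of (3) as a section on the image of each $\eta$; I will state it explicitly that way.
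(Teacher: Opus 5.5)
Your proof is correct and is essentially the paper's: it uses the same cycle (1)$\Rightarrow$(2)$\Rightarrow$(3)$\Rightarrow$(1), writing $\iota\circ\eta$ and $\Updelta\circ\sigma\circ\eta$ as $r$-tuples and comparing them coordinatewise, and you additionally justify the coordinatewise contiguity criterion for $K^{r}$, which the paper uses without comment. Your witness $\sigma=\pr_k|_{\Omega}$ in (3)$\Rightarrow$(1) is the correct one (the paper takes $\pr_i|_{\Omega}\circ\eta$, whose domain is $P$ rather than $\Omega$); and if (3) is read with $k$ allowed to depend on $\eta$, projecting $\Updelta\circ\pr_k|_{\Omega}\circ\eta\sim\iota\circ\eta$ recovers (2), so $\pr_1|_{\Omega}$ still serves as a single witness for all $\eta$.
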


\begin{proof}
    $(1)\implies (2)$. Let $\Omega\subset K^{r}$ be Rudyak subcomplex. Then there exists simplicial map $\sigma : \Omega \to K$ such that for any map $\eta:P\rightarrow \Omega$  from an $m$-dimensional simplicial complex $P$, $\Delta\circ \sigma\circ \eta \sim \iota\circ \eta$ holds. One sees that $\Delta\circ\sigma\circ\eta$ is $r$-tuples of $\sigma\circ\eta$, that is $(\sigma\circ\eta,\cdots,\sigma\circ\eta):P\to K^{r}$ by $(\sigma(\eta(w)),\cdots,\sigma(\eta(w))$ for all $w\in P$. On the other hand, the inclusion map $\iota:\Omega\to K^{r}$ can be written as $(pr_{1},\cdots,pr_{r})$ restricted on $\Omega$.
    Then we observe that $\sigma(\eta(w))\sim pr_{i}|_{\Omega}\circ \eta$ for all $i=1,\cdots,r$.

$(2)\implies (3):$ Fix $i_{0}\in\{1,\cdots,r\}$ and suppose $pr_{i}|_{\Omega}\circ \eta\sim pr_{i_{0}}|_{\Omega}\circ \eta$ for all $i\in \{1,\cdots,r\}$. Then 

$\iota\circ \eta = (pr_{1}|_{\Omega}\circ \eta,\cdots,pr_{r}|_{\Omega}\circ \eta)\sim (pr_{i_{0}}|_{\Omega}\circ \eta,\cdots,pr_{i_{0}}|_{\Omega}\circ \eta)=\Delta\circ pr_{i_{0}}|_{\Omega}\circ \eta$, which means that one of the $pr_{i_{0}}|_{\Omega}\circ \eta$'s is a section of $\Delta$.

$(3)\implies (1)$ Suppose that $pr_{i}|_{\Omega}\circ \eta$ is a section (up to contiguity) of the diagonal map $\Delta$, for some $i\in \{1,\cdots,r\}$ and choose the simplicial map $\sigma:=pr_{i}|_{\Omega}\circ \eta:P\to K$. Then $\Omega$ is Rudyak subcomplex.
    
\end{proof}

\begin{theorem}\label{TC^{m}}
  $\TC_{r}^m(K)=\SD_m(pr_1,pr_2,\cdots,pr_{r}).$  
\end{theorem}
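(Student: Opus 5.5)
We prove the two inequalities separately, in both cases keeping the same cover of $K^{r}$ and using Theorem~\ref{charc} to pass between the Rudyak condition and the $\SD_m$-condition. Here $\pr_1,\dots,\pr_r:K^{r}\to K$ are the projections of the categorical product.

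\emph{First inequality, $\SD_m(\pr_1,\dots,\pr_r)\le \TC_r^m(K)$.} Suppose $\TC_r^m(K)=k$, and let $\Omega_0,\dots,\Omega_k$ be a cover of $K^{r}$ by Rudyak subcomplexes with sections $\sigma_j:\Omega_j\to K$. Fix $j$ and a simplicial map $\eta:P\to\Omega_j$ with $P$ of dimension $m$. By definition,
$\Delta\circ\sigma_j\circ\eta\sim\iota_j\circ\eta$ as maps $P\to K^{r}$. Post-composing a contiguity chain with the simplicial map $\pr_i$ gives again a contiguity chain, since simplicial maps preserve simplices. Using $\pr_i\circ\Delta=\id_K$ and $\pr_i\circ\iota_j=\pr_i|_{\Omega_j}$, we obtain
$$\sigma_j\circ\eta\sim \pr_i|_{\Omega_j}\circ\eta \quad\text{for every } i.$$
Transitivity then gives $\pr_1\circ\eta\sim\dots\sim\pr_r\circ\eta$ on $\Omega_j$. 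This is exactly implication $(1)\Rightarrow(2)$ of Theorem~\ref{charc}, so the same cover witnesses $\SD_m(\pr_1,\dots,\pr_r)\le k$.

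\emph{Second inequality, $\TC_r^m(K)\le\SD_m(\pr_1,\dots,\pr_r)$.} Suppose $\SD_m(\pr_1,\dots,\pr_r)=k$, with cover $\Omega_0,\dots,\Omega_k$. For each $j$ take the single map $\sigma_j:=\pr_1|_{\Omega_j}:\Omega_j\to K$; this choice does not depend on $\eta$, as the definition of a Rudyak subcomplex requires. For any $\eta:P\to\Omega_j$ we have $\pr_i\circ\eta\sim\pr_1\circ\eta$ for all $i$.

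It remains to assemble these componentwise relations into
$$\iota_j\circ\eta=(\pr_1\circ\eta,\dots,\pr_r\circ\eta)\sim(\pr_1\circ\eta,\dots,\pr_1\circ\eta)=\Delta\circ\sigma_j\circ\eta.$$
This is the only step needing care, and it is the main obstacle: contiguity classes in a categorical product must be computed componentwise. The key point is that for maps $f_1,\dots,f_r,g_1,\dots,g_r:P\to K$ with $f_i\sim_c g_i$, the tuples $(f_1,\dots,f_r)$ and $(g_1,\dots,g_r)$ are contiguous. This holds because a set of vertices of $K^{r}$ is a simplex exactly when each coordinate projection is a simplex of $K$.

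If the chains have different lengths, first pad them with repeated maps to a common length. Then change one coordinate at a time, which is the argument behind Lemma~\ref{easy lemma} iterated over $r$ factors. This yields $\iota_j\circ\eta\sim\Delta\circ\sigma_j\circ\eta$, so each $\Omega_j$ is a Rudyak subcomplex, as in $(2)\Rightarrow(3)\Rightarrow(1)$ of Theorem~\ref{charc}. Hence $\TC_r^m(K)\le k$, and the two inequalities together give the equality.
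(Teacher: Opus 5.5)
Your proof is correct and follows the paper's route: the paper simply invokes the equivalence $(1)\Leftrightarrow(2)$ of Theorem~\ref{charc} on the same cover, and your two inequalities do exactly this. Your version is more careful than the paper's in two places. You take the section $\sigma_j=\pr_1|_{\Omega_j}$ independent of $\eta$, whereas the paper's step $(3)\Rightarrow(1)$ uses $\pr_i|_{\Omega}\circ\eta$, which has domain $P$ rather than $\Omega$. You also justify assembling the componentwise contiguity chains into a chain in $K^{r}$.
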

\begin{proof}
    The proof follows from Theorem \ref{charc} (1)-(2). 
\end{proof}

Theorem~\ref{TC^{m}} provides a useful reformulation of the sequential $m$-discrete topological complexity in terms of the $m$-contiguity distance. Consequently, the general properties of $\SD_m$ established in the previous sections immediately imply analogous properties for $\TC_r^m(K)$. We collect some of these consequences in the following corollaries.

\begin{corollary}
    The sequential $m$-discrete  topological complexity of simplicial complex $K$, $\TC_{r}^m(K)$, is an invariant of strong homotopy type.  
\end{corollary}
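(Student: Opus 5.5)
The plan is to combine Theorem~\ref{TC^{m}} with Theorem~\ref{mSD}, just as the corollary on $\scat_m$ was obtained from Theorem~\ref{mLS}. Suppose $K\sim K'$, witnessed by simplicial maps $f:K\to K'$ and $g:K'\to K$ with $g\circ f\sim \id_K$ and $f\circ g\sim \id_{K'}$. By Theorem~\ref{TC^{m}} we have $\TC_r^m(K)=\SD_m(pr_1,\dots,pr_r)$ for $pr_i:K^r\to K$, and similarly $\TC_r^m(K')=\SD_m(pr'_1,\dots,pr'_r)$ for $pr'_i:(K')^r\to K'$. It therefore suffices to show that these two sequential $m$-contiguity distances coincide.

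We apply Theorem~\ref{mSD} with $\beta_j:=g^r=g\times\cdots\times g:(K')^r\to K^r$ and $\alpha_j:=f:K\to K'$, taking the same maps for every $j$. The hypotheses $\beta_i\sim\beta_j$ and $\alpha_i\sim\alpha_j$ then hold trivially. The map $f$ has left strong homotopy inverse $g$, since $g\circ f\sim\id_K$. The map $g^r$ has right strong homotopy inverse $f^r$, because functoriality of the categorical product gives $g^r\circ f^r=(g\circ f)^r$. By Lemma~\ref{easy lemma}, applied iteratively factor by factor, $(g\circ f)^r\sim \id_K^r=\id_{K^r}$.

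Next we check that each square commutes up to contiguity. By naturality of projections, $pr_j\circ g^r=g\circ pr'_j$, so
$$f\circ pr_j\circ g^r=f\circ g\circ pr'_j\sim pr'_j.$$
Here the last step composes the chain of contiguities $f\circ g\sim\id_{K'}$ on the right with $pr'_j$; right composition preserves contiguity. Theorem~\ref{mSD} then yields $\SD_m(pr_1,\dots,pr_r)=\SD_m(pr'_1,\dots,pr'_r)$, which is the desired equality of $\TC_r^m(K)$ and $\TC_r^m(K')$.

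The only step needing care is the claim that $(g\circ f)^r\sim\id_{K^r}$. Lemma~\ref{easy lemma} is stated for two factors, so I would induct on $r$, writing $K^r=K^{r-1}\times K$. A more direct route is also available: if $h\sim_c h'$, then $h\times\id\sim_c h'\times\id$, because a simplex of the product maps to a union whose coordinate projections are simplices. Chaining such contiguities one coordinate at a time gives the contiguity class statement.
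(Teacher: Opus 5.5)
Your proposal is correct and takes the same approach as the paper, whose proof is simply the one-line citation of Theorem~\ref{TC^{m}} and Theorem~\ref{mSD}. You supply the details the paper leaves implicit: the choices $\beta_j=g^r$ and $\alpha_j=f$, the strong homotopy inverses via functoriality of the categorical product and Lemma~\ref{easy lemma}, and commutativity of the squares via $pr_j\circ g^r=g\circ pr'_j$.
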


\begin{proof}
    This follows from Theorem \ref{TC^{m}} and Theorem \ref{mSD}. 
\end{proof}

\begin{corollary}\label{tcmtc}
    $\TC_{r}^m(K)\leq \TC_r(K).$
\end{corollary}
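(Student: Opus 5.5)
We prove $\TC_r^m(K)\leq \TC_r(K)$ by passing both invariants through the sequential contiguity distance and then applying Proposition~\ref{Prop1}(3). The plan has three steps.

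\textbf{Step 1 (left-hand side).} By Theorem~\ref{TC^{m}},
\[
\TC_r^m(K)=\SD_m(\pr_1,\dots,\pr_r),
\]
where $\pr_i:K^r\to K$ are the projections.

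\textbf{Step 2 (right-hand side).} I will identify the sequential discrete topological complexity with the unrestricted sequential distance,
\[
\TC_r(K)=\SD(\pr_1,\dots,\pr_r),
\]
which is the known characterisation in the higher contiguity distance setting of \cite{EYB}. If a self-contained argument is preferred, one reruns the proof of Theorem~\ref{charc} with $\eta$ replaced by the identity of $\Omega$. This shows that $\Omega\subset K^r$ is an $r$-Farber subcomplex exactly when $\pr_1|_\Omega\sim\cdots\sim\pr_r|_\Omega$. The argument uses only two facts:
\begin{itemize}
\item[(a)] $\Delta\circ\sigma$ has all components equal to $\sigma$;
\item[(b)] the inclusion $\iota:\Omega\hookrightarrow K^r$ equals $(\pr_1,\dots,\pr_r)|_\Omega$.
\end{itemize}
We also need that contiguity in a categorical product can be checked componentwise, as in Lemma~\ref{easy lemma}.

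\textbf{Step 3 (comparison).} Proposition~\ref{Prop1}(3) gives
\[
\SD_m(\pr_1,\dots,\pr_r)\leq \SD(\pr_1,\dots,\pr_r).
\]
Combining this with Steps 1 and 2 yields $\TC_r^m(K)\leq \TC_r(K)$.

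It is worth seeing why Proposition~\ref{Prop1}(3) holds, since it drives the whole argument. Take a cover $K_0,\dots,K_k$ of $K^r$ on which the restrictions $\pr_i|_{K_j}$ all lie in one contiguity class. For any $\eta:P\to K_j$, precomposing a chain of contiguities with $\eta$ preserves contiguity. Hence $\pr_i\circ\eta\sim\pr_{i'}\circ\eta$ for all $i,i'$, so the same cover witnesses $\SD_m$.

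The main obstacle is Step 2. The paper has not yet formally stated the identification of $\TC_r$ with $\SD(\pr_1,\dots,\pr_r)$, so it must either be quoted from \cite{EYB} or derived as sketched above. Its key point is the direction that a contiguity chain $\pr_i|_\Omega\sim\pr_1|_\Omega$ in each coordinate assembles into a contiguity chain $\iota\sim\Delta\circ\pr_1|_\Omega$ into $K^r$. The chains may have different lengths in different coordinates, so the shorter ones are padded with constant steps (repeating the final map) until all have equal length. Each step is then a contiguity componentwise, and by the definition of simplices in the categorical product it is a contiguity into $K^r$.
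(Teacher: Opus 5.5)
Your proposal is correct and follows the paper's proof exactly: the paper combines Theorem~\ref{TC^{m}}, Proposition~\ref{Prop1}(3), and the identification $\TC_r(K)=\SD(\pr_1,\dots,\pr_r)$, which it quotes as Theorem 2.2 in \cite{EYB}. Your self-contained sketch of that last identification also holds up; the paper does not include one. It rests on padding contiguity chains with repeated maps (contiguity is reflexive) and checking contiguity into $K^r$ componentwise.
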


\begin{proof}
   The proof follows from Theorem \ref{TC^{m}}, Proposition \ref{Prop1} (3) and Theorem 2.2 in \cite{EYB}.
\end{proof}

\begin{corollary}
    $\TC_{r}^{dim(K)}(K)=\TC_r(K).$
\end{corollary}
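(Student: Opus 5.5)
The corollary should follow by chaining Theorem~\ref{TC^{m}} with Theorem~\ref{dimen}. The one point to check is which complex's dimension enters: the maps $pr_1,\dots,pr_r$ have domain $K^r$, not $K$.

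First, Theorem~\ref{TC^{m}} with $m=\dim(K)$ gives
$$\TC_r^{\dim(K)}(K)=\SD_{\dim(K)}(pr_1,\dots,pr_r),$$
where $pr_i:K^r\to K$ are the projections. Likewise, the case $m=\dim(K^r)$ of Theorem~\ref{TC^{m}}, combined with Theorem~\ref{dimen} applied to the maps $pr_1,\dots,pr_r$ on $K^r$, together with Theorem~2.2 in \cite{EYB}, should give
$$\TC_r(K)=\SD(pr_1,\dots,pr_r)=\SD_{\dim(K^r)}(pr_1,\dots,pr_r).$$
So it remains to compare $\SD_{\dim(K)}$ and $\SD_{\dim(K^r)}$ on the projections. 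One inequality is Proposition~\ref{Prop1}(5), or more directly Corollary~\ref{tcmtc}, which already gives $\TC_r^{\dim(K)}(K)\le \TC_r(K)$.

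For the reverse inequality I would take a cover $\Omega_0,\dots,\Omega_k$ of $K^r$ realizing $\SD_{\dim(K)}(pr_1,\dots,pr_r)$. I would then mimic the proof of Theorem~\ref{dimen}: for each $\Omega_j$, test against a simplicial map $\eta:P\to\Omega_j$ whose image covers $\Omega_j$, and conclude that $pr_1|_{\Omega_j}\sim\dots\sim pr_r|_{\Omega_j}$. That is exactly the Farber condition, so the same cover witnesses $\TC_r(K)\le k$.

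\textbf{Main obstacle.} The surjective test map $\eta$ has domain $\Omega_j$, whose dimension can exceed $\dim(K)$, since $\dim(K^r)$ can be larger than $\dim(K)$. To get around this, I would try to reduce to $\dim(K)$-dimensional test complexes using the fact that contiguity of maps into $K$ is checked simplex by simplex. A simplex of $\Omega_j\subset K^r$ whose coordinate projections $pr_i(\tau)$ are simplices of $K$ should be detectable through test maps from complexes of dimension at most $\dim(K)$, obtained by projecting to the factors. If this reduction fails, the fallback is to read the corollary with $\dim$ meaning the dimension of the domain $K^r$. In that case the statement follows verbatim from Theorem~\ref{TC^{m}}, Theorem~\ref{dimen}, and Theorem~2.2 of \cite{EYB}.
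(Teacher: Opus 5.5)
Your diagnosis is correct, and it also applies to the paper's own proof. The paper simply chains Theorem~\ref{TC^{m}}, Theorem~\ref{dimen} and Theorem~2.2 of \cite{EYB} with $m=\dim(K)$. But Theorem~\ref{dimen} is about the dimension of the \emph{domain}, which for $pr_i:K^r\to K$ is $\dim(K^r)=(\dim K+1)^r-1$. Read literally, that argument proves only your fallback $\TC_r^{\dim(K^r)}(K)=\TC_r(K)$, which is weaker than the corollary.

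Your inequality $\TC_r^{\dim K}(K)\le\TC_r(K)$ is fine. The reverse inequality, however, is only hoped for, so as a proof of the stated corollary the proposal has a genuine gap. The remark that contiguity is checked simplex by simplex does not close it. Put $d=\dim K$. Every test map from a $d$-dimensional complex lands in $\Omega_j^{(d)}$, so taking $P=\Omega_j^{(d)}$, the hypothesis gives only chains of contiguous simplicial maps $h_0\sim_c h_1\sim_c\cdots\sim_c h_k$ from $\Omega_j^{(d)}$ to $K$, joining the restrictions of the $pr_i$. You would have to show two things:
(a) each $h_i$ is simplicial on all of $\Omega_j$;
(b) $h_i\sim_c h_{i+1}$ holds on $\Omega_j$.
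Neither is formal. A vertex map that is simplicial on the $d$-skeleton can send a $(d+1)$-simplex of $\Omega_j$ onto a hollow $(d+1)$-simplex of $K$, for example an empty triangle when $K$ is a graph. Projecting to the factors does not address this.

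The missing idea is to use the dimension of the \emph{target}. The $d$-simplices of $K$ are maximal, and minimal non-faces of $K$ have at most $d+2$ vertices.

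For (a), suppose $h$ sends a $(d+1)$-simplex $\tau$ bijectively onto a minimal non-face $S$, and $g\sim_c h$ on the $d$-skeleton. For each $d$-face $\rho$ of $\tau$, the set $h(\rho)\cup g(\rho)$ is a simplex of $K$ containing the $d$-simplex $h(\rho)$, so $g(\rho)\subseteq h(\rho)$. Intersecting over the $d$-faces through a vertex $u$ gives $g(u)=h(u)$. Hence the whole contiguity class of $h$ is frozen on $\tau$. It therefore cannot contain the simplicial map $pr_1|_{\Omega_j}$, which proves (a).

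For (b), suppose $h_i(\sigma)\cup h_{i+1}(\sigma)$ contains a minimal non-face $S$ for some simplex $\sigma$. For each $s\in S$, choose $v_s\in\sigma$ with $s\in\{h_i(v_s),h_{i+1}(v_s)\}$. If the $v_s$ spanned a face of dimension at most $d$, contiguity would already fail on that face. So $\tau=\{v_s\}$ is a $(d+1)$-face of $\sigma$. The same face-intersection argument then gives $h_i(v_s)=h_{i+1}(v_s)=s$ for every $s$, so $h_i(\tau)=S$, contradicting (a).

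This proves $\SD_{\dim K'}(\varphi_1,\dots,\varphi_n)=\SD(\varphi_1,\dots,\varphi_n)$ for arbitrary maps into $K'$. Applied to $pr_i:K^r\to K$, it gives the corollary as stated.
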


\begin{proof}
    Let $m=dim(K).$ Using Corollary \ref{tcmtc}, we have $\TC_{r}^m(K)\leq\TC_r(K)$. By Theorem \ref{TC^{m}}, Theorem \ref{dimen} and Theorem 2.2 in \cite{EYB}, we obtain $$ \TC_{r}^m(K)=\SD_m(pr_1,pr_2,\cdots,pr_{r})=\SD(pr_1,pr_2,\cdots,pr_{r})=\TC_r(K). $$
\end{proof}

\begin{corollary}
$\TC_{r}^m(K)\leq \TC_{r+1}^m(K)$.
\end{corollary}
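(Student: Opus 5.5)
We will prove $\TC_r^m(K)\leq \TC_{r+1}^m(K)$ through the characterisation of Theorem~\ref{TC^{m}}, which gives $\TC_r^m(K)=\SD_m(pr_1,\dots,pr_r)$ with $pr_i:K^r\to K$, and $\TC_{r+1}^m(K)=\SD_m(\overline{pr}_1,\dots,\overline{pr}_{r+1})$ with $\overline{pr}_i:K^{r+1}\to K$. The idea is to realise the projections of $K^r$ as the first $r$ projections of $K^{r+1}$ precomposed with a suitable simplicial map $\mu:K^r\to K^{r+1}$. We then apply Proposition~\ref{3.3} and Proposition~\ref{Prop1}(4).

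Concretely, define $\mu:K^r\to K^{r+1}$ by $\mu(v_1,\dots,v_r)=(v_1,\dots,v_r,v_r)$. This is the categorical product $\id_K\times\cdots\times\id_K\times\Delta_2$, where the last factor is the diagonal $K\to K\times K$. We first check that $\mu$ is simplicial. If $\{(v_1^0,\dots,v_r^0),\dots\}$ is a simplex of $K^r$, then each coordinate set is a simplex of $K$. Repeating the last coordinate keeps every coordinate set a simplex, so the image is a simplex of $K^{r+1}$ by the definition of the categorical product.

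Next, $\overline{pr}_i\circ\mu=pr_i$ for $i\le r$, and $\overline{pr}_{r+1}\circ\mu=pr_r$. We do not even need the repeated coordinate, since we will drop the last projection. By Proposition~\ref{3.3},
\[
\SD_m(pr_1,\dots,pr_r)=\SD_m(\overline{pr}_1\circ\mu,\dots,\overline{pr}_r\circ\mu)\leq \SD_m(\overline{pr}_1,\dots,\overline{pr}_r).
\]
Proposition~\ref{Prop1}(4), applied with $t=r<n=r+1$, gives $\SD_m(\overline{pr}_1,\dots,\overline{pr}_r)\le \SD_m(\overline{pr}_1,\dots,\overline{pr}_{r+1})$. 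Chaining these and applying Theorem~\ref{TC^{m}} on both ends yields the claim.

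The only point needing care is the case $r=1$, where Proposition~\ref{Prop1}(4) requires $1<t$. Here we argue directly that $\TC_1^m(K)=\SD_m(pr_1)=0$: a single map trivially lies in its own contiguity class, so one subcomplex suffices. Alternatively, we could bypass Proposition~\ref{Prop1}(4) altogether, since it is immediate from the definition: any cover witnessing the condition for all $r+1$ maps also witnesses it for the first $r$. The main obstacle is simply verifying that $\mu$ is simplicial for the categorical product, and that is routine.
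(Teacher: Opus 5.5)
Your proof is correct and has the same skeleton as the paper's. Both rewrite each side via Theorem~\ref{TC^{m}} as $\SD_m$ of projections and then use monotonicity of $\SD_m$ in the number of maps.

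The difference is that the paper's one-line proof simply writes $\SD_m(pr_1,\dots,pr_r)\le\SD_m(pr_1,\dots,pr_{r+1})$. But the projections on the left are maps $K^r\to K$, while those on the right are maps $K^{r+1}\to K$. Proposition~\ref{Prop1}(4) only compares subfamilies of maps with a common domain. The paper actually cites item (5), which concerns changing $m$, not the number of maps.

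Your map $\mu\colon K^r\to K^{r+1}$ with $\overline{pr}_i\circ\mu=pr_i$, combined with Proposition~\ref{3.3}, is exactly the step needed to move everything to the common domain $K^{r+1}$. Your separate handling of $r=1$ also covers the hypothesis $1<t$ in Proposition~\ref{Prop1}(4), which the paper does not address. Your version is therefore the more complete argument.
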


\begin{proof}
    Applying Theorem \ref{TC^{m}} and Proposition \ref{Prop1}(5), we obtain $$\TC_{r}^m(K)=\SD_m(pr_1,pr_2,\cdots,pr_r)\leq\SD_m(pr_1,pr_2,\cdots,\pr_{r+1})=\TC_{r+1}^m(K).$$
\end{proof}

\color{black}

\begin{theorem}\label{maininequality}
    For an edge-path connected simplicial complex K, we have $$\scat_{m}(K^{r-1})\leq TC_{r}^{m}(K)\leq \scat_{m}(K^{r})$$
\end{theorem}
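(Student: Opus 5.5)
The plan is to prove the two inequalities separately, using Theorem~\ref{TC^{m}} to rewrite the middle term as $\TC_r^m(K)=\SD_m(\pr_1,\dots,\pr_r)$ with $\pr_i:K^r\to K$.

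\textbf{Upper bound.} This follows directly from Lemma~\ref{inequlaitymscat}. Apply that lemma with the complex $K^r$ in place of $K$ and the maps $\pr_1,\dots,\pr_r:K^r\to K$. It gives
\[
\SD_m(\pr_1,\dots,\pr_r)\leq \scat_m(K^r),
\]
which is exactly $\TC_r^m(K)\leq\scat_m(K^r)$. The lemma's argument needs edge-path connectedness of $K^r$, or at least that all constant maps into $K$ lie in one contiguity class. This holds because $K$ is edge-path connected, and that hypothesis is included in the statement.

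\textbf{Lower bound.} I will pull back a cover of $K^r$ along a simplicial map $\mu:K^{r-1}\to K^r$. Fix the vertex $v_0$ and define
\[
\mu(x_1,\dots,x_{r-1})=(v_0,x_1,\dots,x_{r-1}),
\]
which is simplicial by the definition of the categorical product. Then $\pr_1\circ\mu=c_{v_0}$ and $\pr_{i+1}\circ\mu=\pr_i'$, the $i$-th projection of $K^{r-1}$. By Proposition~\ref{3.3},
\[
\SD_m(c_{v_0},\pr_1',\dots,\pr_{r-1}')\leq \SD_m(\pr_1,\dots,\pr_r)=\TC_r^m(K).
\]
It remains to show that the left-hand side is at least $\scat_m(K^{r-1})$. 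Take a cover $\{L_j\}$ of $K^{r-1}$ realizing it. For any $\eta:P\to L_j$ with $\dim P=m$, we have $\pr_i'\circ\eta\sim c_{v_0}\circ\eta$ for every $i$. Assembling the components, using the coordinatewise argument of Lemma~\ref{easy lemma} as in the proof of Theorem~\ref{mLS}, gives
\[
\iota_j\circ\eta=(\pr_1'\circ\eta,\dots,\pr_{r-1}'\circ\eta)\sim(c_{v_0},\dots,c_{v_0})\circ\eta.
\]
This is exactly the condition for $L_j$ to be admissible in the definition of $\scat_m(K^{r-1})$. Hence $\scat_m(K^{r-1})\leq \SD_m(c_{v_0},\pr_1',\dots,\pr_{r-1}')$.

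\textbf{Main obstacle.} The delicate step is the coordinatewise assembly: turning contiguity-class relations of the components into a relation of the maps into the product. For contiguity this is immediate: if $f_i\sim_c g_i$ for each $i$, then $(f_1,\dots,f_{r-1})\sim_c(g_1,\dots,g_{r-1})$, since a set of tuples whose coordinate sets are simplices is a simplex of the categorical product. For chains of different lengths, I will pad the shorter chains with repeated maps so all chains have equal length, and then apply the contiguity step index by index.
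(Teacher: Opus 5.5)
Your proof is correct. The upper bound is exactly the paper's argument: Lemma~\ref{inequlaitymscat} applied to $\pr_1,\dots,\pr_r\colon K^r\to K$.

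For the lower bound you take a genuinely shorter route. The paper proceeds in four steps:
\begin{itemize}
\item it invokes Theorem~\ref{mLS} to write $\scat_m(K^{r-1})=\SD_m(\mathcal{I}_1,\dots,\mathcal{I}_{\binom{s}{r-1}})$, where the $\mathcal{I}_j\colon K^{r-1}\to K^s$ are the base-point inclusions into an auxiliary power $K^s$;
\item it factors each $\mathcal{I}_j$ as $J_j\circ\alpha$ through $K^r$, where $\alpha$ inserts the base point and $J_j$ copies the last coordinate into the free slots;
\item it applies Proposition~\ref{3.3} to get $\SD_m(\mathcal{I}_\bullet)\le\SD_m(J_\bullet)$;
\item it shows $\SD_m(J_\bullet)\le\SD_m(\pr_1,\dots,\pr_r)$ by the same coordinatewise assembly you describe.
\end{itemize}

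You instead pull the projections back directly along $\mu$. The pulled-back family $c_{v_0},\pr'_1,\dots,\pr'_{r-1}$ already contains the constant map, so $\scat_m$-admissibility of each piece follows from a single assembly step. In effect you prove directly the one half of Theorem~\ref{mLS} that is needed, in a leaner form. This lets you avoid the auxiliary complex $K^s$ entirely. You also avoid the parameter $s$, which the paper's proof never fixes. Finally, you avoid the maps $J_j$, whose definition in the paper needs some repair: as written, $\alpha$ lands in $K^s$ rather than $K^r$, and $J_j$ uses $\pr_{i_l}$ with $i_l$ possibly exceeding $r$.

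What the paper's version buys is uniformity with its programme of expressing each invariant as an $\SD_m$ of a family of maps and reusing Theorem~\ref{mLS}. Both proofs ultimately rest on the same two ingredients. One is Proposition~\ref{3.3}. The other is the fact that contiguity into a categorical product can be checked coordinatewise; your padding argument correctly handles chains of different lengths.
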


\begin{proof}

By Theorem \ref{mLS}, $\scat_m(K^{r-1})=\SD_{m}\big(\mathcal{I}_1,\mathcal{I}_2,\cdots,\mathcal{I}_{\binom{s}{r-1}}\big)$ where $\mathcal{I}_i:K^{r-1}\rightarrow K^s$ as given in Theorem \ref{mLS}. 

Define $J_{(i_{1},\cdots,i_{r-1})}:K^{r}\to K^{s}$ by  $J_{(i_{1},\cdots,i_{r-1})}(v_{1},\cdots,v_{r})=(\beta_{1},\cdots,\beta{s})$, where 

$$
\beta_{k}=\begin{cases}
\pr_{i_1}, \,\,\, \text{if}  \,\, k=i_{1} \\
\vdots \\
\pr_{i_{r-1}}, \,\,\, \text{if}  \,\, k=i_{r-1} \\
\pr_r, \,\,\, \text{otherwise}
\end{cases}
$$

\noindent Hence
$ \mathcal{I}_{(i_1,\cdots,i_{r-1})}=\big(J_{(i_1,\cdots,i_{r-1})}\circ \alpha \big)(v_{1},\cdots,v_{r-1})
$ where $\alpha: K^{r-1}\rightarrow K^s$, $\alpha(v_1, \cdots, v_{r-1})=(v_1, \cdots, v_{r-1},0)$. This implies 

\[
\begin{aligned}
\SD_{m}\big(\mathcal{I}_1,\mathcal{I}_2,\cdots,\mathcal{I}_{\binom{s}{r-1}}\big)
&=\SD_m\big(J_1 \circ \alpha, J_2\circ \alpha, \cdots,J_{\binom{s}{r-1}}\circ \alpha\big)\\
&\leq \SD_m\big(J_1, J_2, \cdots,J_{\binom{s}{r-1}}\big)
\end{aligned}
\]

\noindent where the first inequality follows from Proposition~\ref{3.3}.

On the other hand, by Theorem~\ref{TC^{m}}, $\SD_{m}(pr_{1},\cdots,pr_{r})=\TC_{r}^{m}(K)$. So it remains to show that $\SD_m\big(J_1, J_2, \cdots,J_{\binom{s}{r-1}}\big)\leq \SD_{m}(pr_{1},\cdots,pr_{r}).$

Let $\SD_{m}(pr_{1},\cdots,pr_r)=k$. Then there exist subcomplexes $K_{0},\cdots,K_{k}$ of $K^r$ covering $K^r$ such that $pr_1\circ\eta, pr_2\circ\eta\ldots,pr_r\circ\eta$ are in same contiguity class where $\eta:P\to K_{j}$ is any simplicial map from an $m$-dimensional simplicial complex $P$ for each $j\in\{0,1,\ldots, k\}$. Our aim is to show that $J_1\circ\eta\sim\cdots\sim J_{\binom{s}{r-1}}\circ\eta$. Since $pr_i\circ\eta\sim\pr_j\circ\eta$ for all $i,j=1,\cdots,r, $ it follows that $J_l\circ\eta\sim J_t\circ\eta$ for all $l,t=1,\cdots,\binom{s}{r-1}.$ This completes the first part of the proof.






The second inequality follows immediately from Lemma \ref{inequlaitymscat} and Theorem \ref{TC^{m}}.

\end{proof}

\begin{remark}
    The proof of Theorem \ref{maininequality} can be given explicitly using their definitions similar to \cite{ABCD}, but it is long and tedious. In this paper, we study the sequential $m$-contiguity distance systematically, so we decide to give this short and elegant proof.  
\end{remark}

\section{Acknowledgement} 

This paper has been submitted in partial fulfillment of the requirements for the PhD degree at Bursa Technical University. The second author was partially supported by the grant No. AP25796111 of the Science Committee of the Ministry of Science and Higher Education of the Republic of Kazakhstan. \\

On behalf of all authors, the corresponding author states that there is no conflict of interest.\\

The manuscript has no associated data.

\end{document}